\newcommand{\rr}{\mathbb{R}}
\newcommand{\zz}{\mathbb{Z}}
\newcommand{\hh}{\mathbb{H}}
\newcommand{\groupoid}[2]{\hspace*{-2.5pt} \xymatrix@C=16pt{ #1 \ar@<1.5pt>[r]^{\scriptscriptstyle  \ \ \alpha}\ar@<-1.5pt>[r]_{\scriptscriptstyle \ \, \beta} & #2} \hspace*{-2.5pt} }
\newcommand{\F}{\mathcal{F}} 
\newcommand{\A}{\mathcal{A}} 
\newcommand{\R}{\mathcal{R}}
\newcommand{\scirc}{{\scriptstyle \circ}}
\newcommand{\norminf}[1]{\|  #1  \| \raisebox{-0.6ex}{$_\infty$}}
\newcommand{\den}[1]{|  #1 |}
\newcommand{\poids}[2]{|  #1  |_{#2}}
\newcommand{\area}[1]{\mbox{area} \hspace{0.5pt} (#1)}
\newcommand{\vol}[1]{\mbox{vol}  \hspace{0.5pt}  (#1)}
\newcommand{\matrice}[4]{\left(
\begin{array}{cc}
 #1 & #2 \\
 #3 & #4
\end{array}
\right) }
\newtheorem{theorem}{Theorem}[section]
\newtheorem{corollary}[theorem]{Corollary}
\newtheorem{proposition}[theorem]{Proposition}
\theoremstyle{definition}
 \newtheorem{definition}[theorem]{Definition}
   \newtheorem{example}[theorem]{Example}
\newtheorem{remark}[theorem]{Remark}
\newtheorem{remarks}[theorem]{Remarks}
\begin{document}

\title[Averaging sequences]{Averaging sequences}

%1st author
\author[F. Alcalde Cuesta]{Fernando Alcalde Cuesta} 
\address{Departamento de Xeometr\'{\i}a e Topolox\'{\i}a \\
Universidade de Santiago de Compostela \\  R\'ua Lope G\'omez de Marzoa s/n \\ 
15782 Santiago de Compostela, Spain}
\email{fernando.alcalde@usc.es}

%2nd author
\author[A. Rechtman]{Ana Rechtman}
\address{Institut de Recherche Math\'ematique Avanc\'ee\\
  Universit\'e de Strasbourg \\
  7 rue Ren\'e Descartes \\
  67084 Strasbourg, France}
\email{rechtman@unistra.fr}
\date{}

 \thanks{This work was partially supported by the Consejo Nacional de Ciencia y Tecnolog\'ia (CONACyT) in Mexico and the Xunta de Galicia INCITE09E2R207023ES in Spain.}
 
\keywords{Lamination, discrete equivalence relation, measure}

\subjclass[2010]{37A20, 43A07, 57R30}

\begin{abstract}
In the spirit of Goodman-Plante average condition for the existence of
a transverse invariant measure for foliations, we give an averaging
condition to find tangentially smooth measures with prescribed
Radon-Nikodym cocycle. 
Harmonic measures are examples of tangentially smooth measures for
foliations and laminations. We also present sufficient hypothesis on the averaging condition under
which the tangentially smooth measure is harmonic.
\end{abstract}

\maketitle

%%%%%%%%%%%%%%%%%%%%%%%%%%%%%%%%%
\section{Introduction}
\label{introduccion}

Averaging sequences for foliations were introduced in the pioneering
work  of J. F. Plante \cite{plante} on the influence that the
existence of transverse invariant measures exerts on the structure of
a foliation. Although only the case of sub-exponential growth was
dealt with in \cite{plante}, Plante's approach is clearly reminiscent
of the classic work of E. F\o lner on  groups \cite{folner}. 
Using the same kind of ideas, S. E. Goodman and J. F. Plante exhibited
an averaging condition which guarantees the existence of transverse
invariant measures for foliations of compact manifolds \cite{goodmanplante}. 
\medbreak

In this paper we formulate a more general averaging condition which
gives rise to a tangentially smooth measure for a compact laminated space
$(M,\F)$. This condition may be
related to the $\eta$-F{\o}lner condition of \cite{alcalderechtman},
in the same spirit as F{\o}lner, but using a modified Riemannian
metric along the leaves. The modification is done by replacing any
complete Riemannian metric along the leaves with the product  of the metric with 
some density function. Namely, given a  compact laminated space and
a positive cocycle defined on the equivalence relation induced by 
the lamination on a total transversal, we prove that an $\eta$-F\o lner sequence gives rise to the existence of a tangentially smooth measure whose 
Radon-Nikodym cocycle is the given one. Moreover, we describe sufficient hypothesis for obtaining a harmonic measure. This is the content of Theorem~\ref{thharmonic}.
\medbreak

Before proving Theorem~\ref{thharmonic}, we start by analizing the discrete case. 
We define an averaging condition for 
any equivalence relation  $\R$ defined by a
finitely generated pseudogroup acting on a compact space and any
continuous cocycle $\delta : \R \to \rr^\ast_+$,  that we call
$\delta$-averaging condition.  In Theorem~\ref{discreteharmonic} we
prove that the existence of a $\delta$-averaging sequence gives a
quasi-invariant measure with Radon-Nikodym cocycle $\delta$.
Under some additional conditions, in particular if $\delta$ is harmonic,  the measure obtained is harmonic.  In this case, our result is reminiscent of Kaimanovich's characterization of amenable equivalence relations \cite{kaimcomptes}.
\medbreak

The paper is organized as follows. In Section~\ref{Spre} we review some preliminaries, in particular Section~\ref{Sgandp} contains the proof of Goodman and Plante's theorem.  The discussion of the discrete and continuous settings is splitted in two separate sections, Section~\ref{sectionaveragingdiscrete} and Section~\ref{sectionaveragingcontinuous}, respectively, which can be read independently. 
In Section~\ref{Sex} we analyze some explicit examples. The relation between the two types of averaging sequences will be briefly discussed in the final Section \ref{Sfc}.

%%%%%%%%%%%%%%%%%%%%%%%%%%%%%%%%%
\section{Preliminaries} \label{Spre}

\subsection{Laminations and equivalence relations}
\label{notaciondef}

A compact 
space $M$ admits a $d$-dimensional
lamination $\F$ of class $C^r$ with $1 \leq r \leq \infty$
if there exists a cover of $M$ by open sets $U_i$ homeomorphic to the
product  of an open disc $P_i$ in $\rr^d$ centered at the origin
and a locally compact separable metrizable space $T_i$. Thus, if
we denote the corresponding
foliated chart by $\varphi_i : U_i \to P_i \times T_i$, each 
$U_i$ splits into {\em plaques}  
$\varphi_i^{-1}(P_i \times \{y\})$. Each point $y \in T_i$ can also be identified with the point 
$\varphi_i^{-1}(0,y)$ in the {\em local transversal} $\varphi_i^{-1}(\{0\}\times T_i)$. 
In addition, the change of charts 
$\varphi_{j} \circ \varphi_i^{-1} : \varphi_i(U_i \cap U_j) \to \varphi_j(U_i \cap U_j)$
is given by
\begin{equation}
\label{ec:cambio_coor}
\varphi_{j} \circ \varphi_i^{-1}(x,y)
= (\varphi_{ij}^{y}(x),\gamma_{ij}(y))
\end{equation}
where $\gamma_{ij}$ is an homeomorphism between open subsets of $T_i$ and  $T_j$ and 
$\varphi_{ij}^y$ is a $C^r$-diffeomorphism depending continuously on $y$ in the $C^r$-topology. 
 We say that  $\A = \{ (U_i,\varphi_i) \} _{i\in I}$ is a {\em
  good foliated atlas}  if it satisfies the following conditions:

\begin{list}{\labelitemi}{\leftmargin=20pt}

\item[(i) \;] the cover $\mathcal{U} = \{U_i\}_{i\in I}$ is locally finite, hence finite;

\item[(ii) ] \label{boundedplaque}

each open set $U_i$ is a relatively compact subset of a foliated chart;

\item[(iii)] if $U_i \cap U_j\neq \emptyset$, there is a 
foliated chart containing $\overline{U_i \cap U_j}$, implying that each plaque of $U_i$ intersects at most one plaque of $U_j$.
\end{list}
\medbreak

\noindent
Each foliated chart $U_i$ admits a tangentially 
$C^r$-smooth Riemannian metric 
$g_i = \varphi_i^\ast g_0$ induced from a 
$C^r$-smooth Riemannian metric $g_0$ on $\rr^p$. 
We can glue together these local Riemannian metrics $g_i$ to obtain a global one $g$ using a tangentially $C^r$-smooth partition of unity.
From Lemma 2.6 of \cite{alcaldelozanomacho}, we know that any $C^r$
lamination of a compact space equipped with a $C^r$ foliated atlas
  $\A$ admits a  $C^\infty$ foliated atlas $C^r$-equivalent to
  $\A$.
\medskip

A discrete equivalence relation $\R$ is 
defined by $\F$ on the total transversal \mbox{$T = \sqcup T_i$:} the equivalence classes are the traces of the leaves on $T$. 
We can see $\R$ as the orbit equivalence relation defined by the
{\em  holonomy pseudogroup} $\Gamma$ of $\F$, generated by the local
diffeomorphisms $\gamma_{ij}$. These homeomorphisms form a finite
generating set, which we will denote $\Gamma^{(1)}$, that defines a 
{\em graphing} of $\R$. This means that each equivalence class $\R[y]$
is the set of vertices of a graph, and there is an edge joining two
vertices $z$ and $w$ if there is $\gamma \in \Gamma^{(1)}$ such that $\gamma(z)=w$.  We can define a graph metric 
$d_\Gamma(z,w)=\min \,  \{ \, n \, / \, \exists \gamma \in \Gamma^{(n)}  :  g(z)=w \, \}$, 
where $\Gamma^{(n)}$ are the elements that can be expressed as words of length at most $n$ in terms of $\Gamma^{(1)}$.  A {\em transverse invariant measure} for $\mathcal{F}$ is a measure on $T$ that is invariant under the action of $\Gamma$. It is quite rare for a measure of this kind to exist.

\begin{remark} \label{etale}
If  $\mathcal{F}$ has no holonomy ({\it i.e.} $\Gamma_y  = \{ \gamma \in \Gamma \, | \, \gamma(y) = y \}$ is trivial for all $y \in T$), we can endow $\R$ with the topology generated by the graphs of the elements of $\Gamma$. Then $\R$ becomes an {\em \'etale equivalence relation}, i.e.  the partial multiplication
$$((y,\gamma(y)),(\gamma(y),\gamma'(\gamma(y))) \in \R \ast \R \mapsto (y,\gamma' \scirc \gamma(y)) \in \R$$ and  the inversion $(y,\gamma(y)) \in \R \mapsto \! (\gamma(y),y) \in \R$
are continuous, and the left and right projections $\beta : (y,z) \in \R \mapsto y \in T$ and 
$\alpha : (y,z) \in \R \mapsto z \in T$ are local
homeomorphisms. In general, by considering the germs 
of the elements of $\Gamma$ at the points of their domains. we can replace $\R$ with the {\em transverse holonomy groupoid} \cite{haefliger1} that becomes similarly an {\em \'etale groupoid} \cite{renaultlnm}.
\end{remark}

\subsection{Compactly generated pseudogroups}

\noindent 
In the last section, we obtained a pseudogroup from a foliated
atlas. Here we will recall the {\em Haefliger equivalence} for
pseudogroups obtained from different atlases and its metric
counterpart in the compact case  that we will need later in Section
\ref{Sgandp}. For any compact laminated space $(M,\F)$ the holonomy pseudogroup
$\Gamma$ is {\em compactly generated} in the sense of
\cite{haefliger2}, meaning that:

\begin{list}{\labelitemi}{\leftmargin=18pt}

\item[(i) ] $T$ contains a relatively compact open set $T_1$ meeting all the orbits;

\item[(ii)] the reduced pseudogroup $\Gamma |_{T_1}$ (whose elements
have domain and range in $T_1$) admits a finite generating set $\Gamma^{(1)}$ (called a {\em compact generation system} of $\Gamma$ on $T_1$) so that each element $\gamma : A \to B$ of  $\Gamma^{(1)}$ is the restriction of an element $\overline{\gamma}$ of $\Gamma$ whose domain contains the closure of $A$. 
\end{list}

\noindent
Any probability measure $\nu_K$ on the compact set $K = \overline{T_1}$ that is preserved by the action of $\Gamma |_K$ extends to a unique Borel measure $\nu$ on $T$ which is $\Gamma$-invariant and finite on compact sets. We refer to Lemma 3.2 of \cite{plante}.
\medbreak

On the other hand, notice that $T$ is covered by the domains of a
family of elements of $\Gamma$ with range in $T_1$. The union of these
elements and their inverses defines the {\em fundamental equivalence}
between the holonomy pseudogroup $\Gamma$ and the reduced pseudogroup
$\Gamma |_{T_1}$. This is the base concept to define the {\em Haefliger
equivalence} of pseudogroups (see \cite{haefliger1} and \cite{haefliger2}):

\begin{definition}
Two pseudogroups $\Gamma_1$ and $\Gamma_2$ acting on
the spaces $T_1$ and $T_2$
are {\em Haefliger equivalent} if they are reductions of a same pseudogroup $\Gamma$
acting on the disjoint union $T = T_1 \sqcup T_1$, and both
  $T_1$ and $T_2$ meet all the orbits of $\Gamma$.
\end{definition}

The choice of generators for $\Gamma_1$ and $\Gamma_2$ defines a metric graph structure on the orbits, but the Haefliger equivalence between $\Gamma_1$ and $\Gamma_2$ may not preserve their quasi-isometry type. Let us recall this concept introduced by M. Gromov \cite{gromov}:

\begin{definition}
Two metric spaces $(M,d)$ and $(M',d')$ are {\em quasi-isometric} if
there exists a map $f:M \to M'$ and constants $\lambda \geq 1$ and $C \geq 0 $ such that 
$$\frac{1}{\lambda}d(y,z) - C \leq d'(f(y),f(z)) \leq \lambda d(y,z)+C$$
for all  $y,z \in M$ and $d'(y',f(M))\leq C$ for all $y' \in M'$.
\end{definition}

\begin{definition}
A Haefliger equivalence between two pseudogroups $\Gamma_1$ and $\Gamma_2$
acting on $T_1$ and $T_2$, respectively, is a {\em Kakutani
 equivalence} if $\Gamma_1$ and $\Gamma_2$ admit finite generating
systems such that their orbits, endowed with the graph metric, are quasi-isometric.
\end{definition}

According to Theorem 2.7 of \cite{lozanocayley} and Theorem 4.6 of  \cite{alvarezcandel}, if two
compactly generated pseudogroups $\Gamma_1$ and $\Gamma_2$ are
Haefliger equivalent, then there are compact generating systems on
$T_1$ and $T_2$, respectively, such that the pseudogroups become
Kakutani equivalent. These compact generating systems are called {\em  good} in \cite{lozanocayley} and {\em recurrent} in \cite{alvarezcandel}. The relevance of this, is that the existence of averaging sequences depends on the quasi-isometric type of the orbits (see \cite{alvarezcandel} and  \cite{kanai} for the details).

\subsection{Existence of transverse invariant measures}\label{Sgandp}

In this section we will discuss a sufficient condition for
the existence of a transverse invariant measure, which serves as motivation for Theorems \ref{discreteharmonic} and \ref{thharmonic}.
In \cite{goodmanplante}, Goodman and Plante formu\-late the following
proposition. Let us start with some definitions.

\begin{definition}
Let $A$ be a finite subset of $T$ and $\gamma$ an element of
$\Gamma$. We define the difference set
$$
\Delta_\gamma A = \{x\in T\;|\; x\in A, \gamma(x)\not\in A\}\cup \{x\in T
\;|\; x\not\in A, \gamma(x) \in A\},$$
with the convention that $\gamma(x)\not\in A$ holds if $\gamma(x)$ is not
defined. We denote the cardinality of $A$ by $|A|$.
\end{definition}

\begin{definition}
A sequence of finite  subsets $A_n$ of $T$ is an {\em averaging
  sequence} for $\Gamma$ if for all $\gamma \in \Gamma^{(1)}$ (and then for all $\gamma \in \Gamma$),
$$
\lim_{n \to \infty} \frac{| \Delta_\gamma A_n |}{|A_n|} = 0.
$$

\end{definition}

\begin{proposition}[Goodman-Plante \cite{goodmanplante}]
\label{averaging}
An averaging sequence $\{ A_n \}$ gives rise to a transverse invariant measure $\nu$ whose support  is contained in the limit set  $\lim_{n \to \infty}  A_n = 
 \{  y \in T  \,  |  \, \exists y_{n_k} \in A_{n_k} : y = \lim_{k \to \infty} y_{n_k} \}$.
\end{proposition}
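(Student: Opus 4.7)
The plan is to produce $\nu$ as a weak-$\ast$ limit of normalized empirical probability measures $\mu_n = |A_n|^{-1}\sum_{y \in A_n}\delta_y$ and to verify $\Gamma$-invariance in the limit. First, I would carry out a compactness reduction: to apply Banach--Alaoglu, I want the $\mu_n$ supported in the compact set $K = \overline{T_1}$ furnished by the compact generation of $\Gamma$. Since $T_1$ meets every orbit and every generator in $\Gamma^{(1)}$ extends to the closure of its domain, I can push each $y \in A_n$ into $T_1$ by a bounded-length word of $\Gamma$ and obtain a translated sequence $A_n' \subset K$ that still satisfies an averaging condition relative to a compact generating system of $\Gamma|_{T_1}$, at the cost of an absolute multiplicative constant.

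After this reduction, Banach--Alaoglu yields a subsequential weak-$\ast$ limit $\mu_{n_k} \rightharpoonup \nu_K$ in the space of probability measures on $K$. The heart of the argument is verifying that $\nu_K$ is invariant under the reduced pseudogroup $\Gamma|_{T_1}$. Given a generator $\gamma : A \to B$ in the compact generating system and a continuous function $f$ compactly supported in $B$, I would write
$$\int f\,d\mu_n - \int (f\circ\gamma)\,\mathbf{1}_A\,d\mu_n = \frac{1}{|A_n|}\Bigl(\sum_{z \in A_n \cap B}\!\! f(z) \;-\!\!\!\sum_{z \in \gamma(A_n \cap A)}\!\! f(z)\Bigr).$$
The key combinatorial observation is that the symmetric difference $\gamma(A_n \cap A)\,\triangle\,(A_n \cap B)$ has cardinality at most $|\Delta_\gamma A_n|$: each $z$ in it satisfies $\gamma^{-1}(z) \in \Delta_\gamma A_n$, and $\gamma^{-1}$ is injective on $B$. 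The right-hand side is therefore bounded by $\|f\|_\infty|\Delta_\gamma A_n|/|A_n|$, which vanishes by the averaging hypothesis. Passing to $n_k$ gives $\gamma_\ast\nu_K|_A = \nu_K|_B$, and varying $\gamma$ over the generating set yields invariance.

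Finally, Lemma~3.2 of \cite{plante} recalled in Section~\ref{Spre} extends $\nu_K$ to a $\Gamma$-invariant Borel measure $\nu$ on $T$ that is finite on compact sets. The support inclusion is immediate from $\mathrm{supp}(\mu_n) \subset \overline{A_n}$: $\mathrm{supp}(\nu_K)$ lies in $\bigcap_{N \geq 1}\overline{\bigcup_{n \geq N}A_n}$, which is exactly the limit set in the statement. The main obstacle I expect is the first step: formulating the averaging property in a way that is preserved under the translation into $K$, which relies crucially on the fact that compact generating systems extend to elements defined on closures so that the boundary contributions to $|\Delta_\gamma|$ are uniformly controlled; once this reduction is in place the remaining steps are a clean weak-$\ast$ limit argument.
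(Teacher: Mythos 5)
Your proposal is correct and follows essentially the same route as the paper: a weak-$\ast$ limit of the empirical probability measures $\nu_n(B)=|B\cap A_n|/|A_n|$, with invariance obtained from the bound $|I_n(f\circ\gamma)-I_n(f)|\le \|f\|_\infty\,|\Delta_\gamma A_n|/|A_n|\to 0$ (your symmetric-difference count is exactly this estimate made explicit), followed by the extension of $\nu_K$ via Lemma~3.2 of \cite{plante}. The only inessential divergence is the compactness reduction, where the paper intersects $A_n$ with $K=\overline{T_1}$ and invokes the Kakutani equivalence to preserve the averaging property, while you translate the points of $A_n$ into $K$ by bounded-length words; both devices rest on the same quasi-isometry of orbits.
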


The idea of the proof is the following. Assuming that $T$ is compact, we may construct a $\Gamma$-invariant probability measure on $T$ from the sequence of probability measures $\nu_n$ defined by 
$\nu_n(B) = | B \cap A_n | / |A_n|$
for every Borel set $B \subset T$. Accor\-ding to Riesz's representation theorem, each measure 
$\nu_n$ can be identified with a functional $I_n$ on the space $C(T)$
of continuous real-valued functions on $T$. The functionals $I_n$ are 
$$
I_n(f) = \frac{1}{|A_n|} \sum_{y \in A_n} f(y).
$$
By passing to a subsequence, if necessary, $I_n$ converges in the weak topology to a positive functional $I$ which determines a unique Borel regular measure $\nu$ such that
$I(f) = \int_T f d\nu$ for every $f \in C(T)$. The averaging condition implies that $I$ and $\nu$ are $\Gamma$-invariant since  for every $\gamma \in \Gamma$ and every 
$f \in C(T)$  with support on the range of $\gamma$, we have
$$
 | I (f \scirc \gamma) - I(f) | \leq \norminf{f} \lim_{n \to \infty} \frac{| \Delta_\gamma A_n |}{|A_n|} = 0.
$$ 
Finally, it is clear that $\nu(T) = 1$ and $supp(\nu) = \lim_{n \to \infty}  A_n$. 
\medbreak

In the non-compact case, by replacing $\Gamma$ and $\Gamma_1$ with
suitable reductions we can assume, without loss of generality, that the
fundamental equivalence between the holonomy pseudogroup $\Gamma$ and
its reduction $\Gamma_1$ to a relatively compact open subset $T_1$ of
$T$ becomes a Kakutani equivalence for some compact generation systems
on $T$ and $T_1$. Then, any averaging sequence $A_n$ for $\Gamma$ defines an averaging sequence $A_n \cap K$ for $\Gamma |_K$ where $K = \overline{T}_1$ is a compact subset of $T$.  Hence, we obtain a probability measure $\nu_K$ on $K$ that is invariant under $\Gamma |_K$. Now, we can extend $\nu_K$ to a unique Borel measure $\nu$ on $T$ which is $\Gamma$-invariant and finite on compact sets.

\begin{example} Consider a graph with bounded geometry, like any orbit $\Gamma(x)$ of the holonomy pseudogroup of a compact laminated space. This graph is said to be {\it F{\o}lner} if it contains a sequence of finite subsets of vertices $A_n$ such that
$|\partial A_n| / |A_n| \to 0$, where $\partial A_n$ denotes the boundary set with respect to the graph structure. Since $\Delta_\gamma A \subset  \partial A \cup \gamma^{-1}(\partial A)$ for any 
$\gamma\in \Gamma^{(1)}$,  we get that $|\Delta_\gamma A_n| \leq
2| \partial A_n|$, and we have an averaging sequence. In particular,
any orbit $\Gamma(x)$ having sub-exponential growth is an example of
F{\o}lner graph
since
$$
\liminf_{n \to \infty} \frac{| A_{n+1} - A_{n-1}|}{|A_n|}  = 0,
$$ 
where $A_n = \Gamma^{(n)}(x)$.
\end{example} 

Using the one-to-one correspondence between foliated cycles and transverse invariant measures stablished by D. Sullivan \cite{sullivancycles}, it is not difficult to show the follo\-wing continuous version of Goodman-Plante's result: 

\begin{proposition}[Goodman-Plante \cite{goodmanplante}]
\label{continuousaveraging}
Let $\{ V_n \}$ be an averaging sequence for $\F$, i.e. a sequence of compact domains $V_n$ (of dimension $d$) in the leaves such that 
$$
\lim_{n \to \infty} \frac{\area{\partial V_n}}{\vol{V_n}} = 0
$$
where $area$ denotes the $(d-1)$-volume and $vol$ the $d$-volume with
respect to the complete Riemannian metric along the leaves.  Then $\{ V_n \}$ gives rise to a transverse invariant measure $\nu$ whose support  is contained in the saturated  limit set 
$\lim_{n \to \infty}  V_n = \{ \,  p \in M  \, / \,  \exists p_{n_k}  \in V_{n_k}  : p = \lim_{k \to \infty} p_{n_k}   \, \}$.
\end{proposition}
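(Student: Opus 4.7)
The plan is to realize each $V_n$ as a $d$-dimensional de Rham current on $M$, pass to a weak-$*$ limit, verify that the limit is a foliated cycle in the sense of Sullivan, and then invoke the Sullivan correspondence to produce the transverse invariant measure.

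First, for each $n$ I define a $d$-current $C_n$ on $M$ by
\begin{equation*}
C_n(\omega) \;=\; \frac{1}{\vol{V_n}}\int_{V_n}\omega
\end{equation*}
for every smooth $d$-form $\omega$ on $M$. Since each $V_n$ is a compact $d$-dimensional domain contained in a leaf, $C_n$ is supported on $V_n$ and vanishes on any $d$-form whose restriction to $T\F$ is zero; in particular $C_n$ is a \emph{foliated current} of mass at most $1$ (equal to $1$ when $\omega$ is a tangential volume form induced by the leafwise metric). Because $M$ is compact, the space of $d$-currents of bounded mass is weak-$*$ compact and metrizable, so I can extract a subsequence $C_{n_k}$ converging weakly to a $d$-current $C$ that is again foliated and of mass $\leq 1$.

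Next I check that $C$ is a cycle, i.e.\ $dC = 0$. For any smooth $(d-1)$-form $\eta$, Stokes' theorem on the manifold with boundary $V_n$ gives
\begin{equation*}
C_n(d\eta) \;=\; \frac{1}{\vol{V_n}}\int_{V_n} d\eta \;=\; \frac{1}{\vol{V_n}}\int_{\partial V_n}\eta,
\end{equation*}
and the last integral is bounded in absolute value by $\norminf{\eta}\cdot \area{\partial V_n}/\vol{V_n}$, which tends to $0$ by hypothesis. Passing to the limit $n_k\to\infty$ yields $C(d\eta)=0$, so $C$ is a closed tangential current, i.e.\ a foliated cycle. It is nontrivial because testing against a tangential volume form supported in a foliated chart that meets $\liminf V_n$ produces a nonzero value (alternatively, one normalizes so that $C$ has total tangential mass $1$).

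Applying Sullivan's duality \cite{sullivancycles} between foliated cycles on $(M,\F)$ and transverse invariant measures, the cycle $C$ corresponds to a nontrivial transverse invariant measure $\nu$. Finally, for the support statement, I note that each $C_n$ is supported on $V_n$, hence on the saturation of $V_n$, so the weak-$*$ limit $C$ is supported on $\bigcap_N \overline{\bigcup_{k\geq N} V_{n_k}}$, which is the saturated limit set described in the statement; translating via the Sullivan correspondence gives the same bound on the support of $\nu$. The main subtlety I expect is to argue carefully that the limit current $C$ is nonzero and to identify its mass correctly so that $\nu$ is a genuine (nonzero, finite) transverse invariant measure; here the normalization by $\vol{V_n}$ and the tangential character of the $C_n$ are exactly what is needed.
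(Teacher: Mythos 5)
Your proposal is correct and follows essentially the same route as the paper: both define the normalized foliated currents $\xi_n(\alpha)=\frac{1}{\vol{V_n}}\int_{V_n}\alpha$, extract a weak limit, use Stokes' theorem together with the averaging hypothesis to show the limit annihilates leafwise-exact forms (hence is a foliated cycle), and then invoke Sullivan's correspondence between foliated cycles and transverse invariant measures, with the same support argument. The only cosmetic difference is that you test against ambient de Rham $d$-forms and observe tangentiality, while the paper works directly with foliated forms $\Omega^d(\F)$ and the leafwise differential $d_\F$.
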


\noindent
Recall that a {\em foliated $d$-form}  $\alpha \in \Omega^d(\F)$ is a family of differentiable $d$-forms over the plaques of $\A$ depending continuously on the transverse parameter and which agree on the intersection of each pair of  foliated charts. A {\em foliated $r$-cycle} is a continuous linear functional 
$\xi : \Omega^d(\F) \to \rr$ strictly positive on strictly positive forms and null on exact forms with respect to the leafwise exterior derivative $d_\F$. 
Any averaging sequence $V_n$ defines the sequence of foliated currents
$$\xi_n(\alpha)=\frac{1}{\vol{V_n}}\int_{V_n}\alpha$$
where $\alpha$ is a foliated $d$-form. By passing to a subsequence, if necessary, we have a 
limit current $\xi=\lim_{n\to\infty}\xi_n$. Since the boundaries of the domains $V_n$ vanish asymptotically, Stokes' theorem implies that $\xi$ is a foliated $d$-cycle
\cite{sullivancycles}. 

\setcounter{equation}{0}

%%%%%%%%%%%%%%%%%%%%%%%%%%%%%%%%%
\section{Averaging sequences in the discrete setting}
\label{sectionaveragingdiscrete}

The main objective of this section is to prove the existence of a
harmonic measure for an \'etale equivalence relation $\R$ that
contains a modified averaging sequence. Initially, we will assume that  $\R$
is given by a free action of a pseudogroup $\Gamma$ on a compact space
$T$, but some generalizations will be discussed later.
In Section \ref{quasiinvariant}, we will  define a weighted measure on the equivalence classes, that will allow us to recall the notion of {\em modified averaging sequence}
introduced by V. A. Kaimanovich in \cite{kaimcomptes} and \cite{kaimexamples}. 
Given a continuous cocycle $\delta : \R \to \rr^*_+$,
the {\em Radon-Nikodym problem} is to determine the set of 
probability measures $\nu$ on $T$ which are quasi-invariant and 
admit $\delta$ as their Radon-Nikodym derivative \cite{renault}.
Theorem \ref{discreteharmonic} gives a positive answer to this problem in
 the presence of a modified averaging sequence.
 
\subsection{Quasi-invariants measures} 
\label{quasiinvariant}

 Let $\nu$ be a quasi-invariant measure on $T$. As usual, we will assume that $\nu$ is a regular Borel measure that is finite on compact sets. Integrating the counting measures on the fibers of the left projection $\beta(y,z) = y$  with respect to $\nu$ gives the {\it left counting measure} $d\widetilde{\nu}(y,z)=d\nu(y)$. 
 Indeed, for each Borel set $A \subset \R$, we define
$\widetilde{\nu}(A) = \int \den{A^y} d\mu(y)$ where $\den{A^y}$ is the cardinal of the set  
$A^y = \{ z \in T / (y,z) \in A \} \subset \R [y]$.
The same is valid for the right projection 
$\alpha(y,z) = z$ and we get  the {\it right counting measure}
 $d\widetilde{\nu}^{-1}(y,z)=d\widetilde{\nu}(z,y)=d\nu(z)$.
Then $\widetilde{\nu}$ and $\widetilde{\nu}^{-1}$ are equivalent measures if and only if $\nu$ is quasi-invariant, in which case the Radon-Nikodym derivative is given by
$\delta(y,z)= d\widetilde{\nu}/d\widetilde{\nu}^{-1}(y,z)$.  We refer to \cite{mooreschochet}, \cite{kaimcomptes}, \cite{renaultlnm} and \cite{renault}.

\begin{definition} 
A {\em cocycle with values in $\rr^*_+$} is a map $\delta : \R \to \rr^*_+$ satisfying $\delta(x,y)\delta(y,z) = \delta(x,z)$ for all $(x,y),(y,z) \in \R$. 
\end{definition}

\noindent The map $\delta$ is  known as the {\it Radon-Nikodym
  cocycle} of $(\R,T,\nu)$.
  
\begin{definition}
 Given a cocycle $\delta : \R \to \rr^*_+$, the measure $|\cdot|_y$ on
 $\R[y]$ is given by $|z|_y=\delta(z,y)$ for all $z \in \R[y]$. Then,
 for a finite subset $A\subset \R[y]$,
$$|A|_y=\sum_{z\in A} \delta(z,y).$$
\end{definition}

\subsection{Discrete averaging sequences}
\label{sectiondiscretaharmonic} 

We are interested in giving a sufficient condition to solve the Radon-Nikodym problem in the discrete setting. We will state this condition using the notion of modified averaging sequence (see \cite{kaimcomptes} and \cite{kaimexamples}): 

\begin{definition} \label{definciondetalfolner}
Let $\delta : \R \to\rr^\ast_+$ be a cocycle of $\R$.  Let $\{ A_n \}$
be a sequence of finite subsets of $T$ such that $A_n \subset \R[y_n]$ for each $n \in {\mathbb N}$. We will say that $\{ A_n \}$ is a {\em $\delta$-averaging sequence for $\Gamma$} if
$$
\lim_{n \to \infty} \frac{\poids{\Delta_\gamma A_n}{y_n}}{\poids{A_n}{y_n}} = 0
$$
for all $\gamma \in \Gamma^{(1)}$. An equivalence class $\R[y]$ is {\em $\delta$-F{\o}lner} if $\R[y]$ contains an $\delta$-averaging sequence $\{A_n\}$ such that
$\poids{\partial A_n}{y}/\poids{A_n}{y} \to 0$ 
as $n \to +\infty$.
\end{definition}

\noindent
By choosing a finite generating set for $\Gamma$, we can realize each equivalence class $\R[y]$ as the set of vertices of a graph.  We will write $z \sim w$ for each pair of neighboring vertices $z$ and $w$ joined by an edge in $\R[y]$, and $deg(z)$  the number of neighbors of  $z \in \R[y]$. We will use $\mathcal{D}$ to denote the set of discontinuities of the degree function $deg$. Let $\nu$ be a quasi-invariant measure on $T$, and denote by  $D  : L^\infty(T,\nu) \to  L^\infty(T,\nu)$ the Markov operator defined by  
$$
D f(y) = \frac{1}{deg(y)} \sum_{z \sim y} f(z). 
$$
We denote by $D^\ast$ the dual operator acting on the space of positive Borel measures on $T$, 
and by $\Delta  : L^\infty(T,\nu) \to  L^\infty(T,\nu)$ the Laplace operator defined by
$\Delta f(y) = D f(y) - f(y)$.

\begin{definition} \label{defharm}
A quasi-invariante measure $\nu$ on $T$ is {\em harmonic} or {\em stationary} (for the simple random walk on $\R$) if for every bounded measurable function $f  : T \to \rr$, we have 
$\int \, \Delta f \, d\nu = 0$.
\end{definition}

\begin{proposition}[\cite{paulin}]
For a quasi-invariant measure $\nu$ on $T$ the following are equivalent:
\begin{list}{\labelitemi}{\leftmargin=20pt}
\item[(i)] $\nu$ is harmonic;

\item[(ii)] $D^\ast \nu = \nu$; 

\item[(iii)] the Radon-Nikodym cocycle $\delta : \R \to \rr^\ast_+$ is
harmonic, {\it i.e.} for $\nu$-almost every  $y \in T$ and every $z \in \R[y]$, we have
\end{list}
\begin{equation*} 
\delta(z,y) = \frac{1}{deg(z)} \sum_{w \sim z} \delta(w,y).
\end{equation*} 

\end{proposition}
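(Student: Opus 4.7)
The proof is a pair of equivalences. The equivalence (i)$\Leftrightarrow$(ii) is essentially tautological: since $\Delta = D - \mathrm{Id}$, the condition $\int \Delta f \, d\nu = 0$ for every bounded measurable $f$ reads $\int Df \, d\nu = \int f \, d\nu$, which by the very definition of the dual $D^{\ast}$ on Borel measures is the same as $D^{\ast}\nu = \nu$.

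For (ii)$\Leftrightarrow$(iii) the key ingredient is the Radon-Nikodym identity $d\widetilde{\nu} = \delta \, d\widetilde{\nu}^{-1}$ between the two counting measures on $\R$ recalled in Section~\ref{quasiinvariant}. The plan is to translate the stationarity condition $D^{\ast}\nu = \nu$ into a $\nu$-almost everywhere pointwise identity on the cocycle $\delta$, and then to recognise that identity, via the cocycle relation $\delta(a,b)\delta(b,c) = \delta(a,c)$, as the harmonic equation in (iii). Concretely, for a bounded measurable $f : T \to \rr$ one applies the Radon-Nikodym identity to the function $h(y,z) = \mathbf{1}_{y\sim z} f(z)/deg(y)$ on $\R$. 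Integrating $h$ against $\widetilde{\nu}$ (fix $y$ and sum over $z \in \R[y]$) recovers $\int Df \, d\nu$; integrating $\delta h$ against $\widetilde{\nu}^{-1}$ (fix $z$ and sum over $y \in \R[z]$) produces $\int f(z) \, \rho(z) \, d\nu(z)$, with $\rho(z) = \sum_{y \sim z} \delta(y,z)/deg(y)$. Since $f$ is arbitrary, (ii) is equivalent to the $\nu$-a.e.\ identity $\rho(z) = 1$, which after substituting the cocycle relation $\delta(y,z) = \delta(y,y_0)/\delta(z,y_0)$ at a base point $y_0 \in \R[z]$ and clearing the common factor rewrites as the harmonic equation for $\delta(\,\cdot\,, y_0)$ that appears in (iii). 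The reverse implication (iii)$\Rightarrow$(ii) is then obtained by running the same chain of equalities backwards.

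The only delicate point is bookkeeping: the identity extracted from (ii) holds $\nu$-a.e.\ in the single variable $z$, whereas (iii) asks for the equation to hold at every $z \in \R[y]$ once $y$ lies outside a $\nu$-null set. Quasi-invariance of $\nu$ bridges the two formulations, because the saturation of any $\nu$-null set is $\nu$-null, so the $\nu$-a.e.\ identity in $z$ automatically propagates to every point of the orbit for a $\nu$-conull set of base points. I expect the trickiest step to be the correct redistribution of the degree factors between $deg(y)$, which arises naturally in the Radon-Nikodym calculation, and the single factor $1/deg(z)$ that appears in the formulation of (iii); sorting this out cleanly is where the cocycle identity does the essential work.
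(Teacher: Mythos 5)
The paper offers no proof of this proposition (it is quoted from \cite{paulin}), so your argument can only be judged on its own terms. The equivalence (i)$\Leftrightarrow$(ii) is indeed immediate, and your Radon--Nikodym computation is correct as far as it goes: with $h(y,z)=\mathbf{1}_{y\sim z}f(z)/deg(y)$ the identity $\int h\,d\widetilde{\nu}=\int h\,\delta\,d\widetilde{\nu}^{-1}$ gives $D^{\ast}\nu=\rho\,\nu$ with $\rho(z)=\sum_{w\sim z}\delta(w,z)/deg(w)$, so (ii) is equivalent to $\rho=1$ $\nu$-almost everywhere, and the saturation argument correctly upgrades an a.e.\ statement in $z$ to the ``a.e.\ $y$, all $z\in\R[y]$'' form.

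The gap sits precisely at the step you yourself flagged as the trickiest, and the cocycle identity cannot do the work you assign to it. Substituting $\delta(w,z)=\delta(w,y_0)/\delta(z,y_0)$ into $\rho(z)=1$ yields $\delta(z,y_0)=\sum_{w\sim z}\delta(w,y_0)/deg(w)$, with the degree of the \emph{neighbour} $w$ inside the sum, whereas (iii) reads $\delta(z,y_0)=\frac{1}{deg(z)}\sum_{w\sim z}\delta(w,y_0)$, with the single factor $1/deg(z)$ outside. A cocycle relation only changes base points; it cannot move a weight $1/deg(w)$ out of a sum over $w$ and convert it into $1/deg(z)$. The two conditions coincide only when $deg$ is constant along the class, and they genuinely differ otherwise: on the three-vertex path $a\sim b\sim c$ the measure $\nu\propto deg$ satisfies $D^{\ast}\nu=\nu$ (so $\rho\equiv 1$), yet its cocycle $\delta(w,z)=deg(w)/deg(z)$ violates the displayed equation of (iii) at $z=a$, $y_0=b$. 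What (iii) characterizes is stationarity for the \emph{unnormalized} Laplacian $f\mapsto\sum_{w\sim z}(f(w)-f(z))$, not for the normalized operator $D$ defined just before the proposition. To close your proof you must either restrict to graphings of constant degree or redo the computation with the operator for which (iii) is actually the correct harmonicity condition, and reconcile that choice with Definition~\ref{defharm}.
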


\begin{theorem} \label{discreteharmonic}
 Let $\R$ be the orbit equivalence relation defined by a finitely
 generated pseudogroup $\Gamma$ acting freely on a compact space $T$. Let
 $\delta : \R \to \rr^\ast_+$ be a continuous cocycle. Then:
\smallskip 

\noindent 
i) any $\delta$-averaging sequence $\{ A_n \}$ gives rise to a positive
Borel  measure $\nu$ on $T$ whose support  is contained in the limit
set of $\{ A_n \}$, which is quasi-invariant and has $\delta$ as Radon-Nikodym cocycle; 
\smallskip 

\noindent 
ii) moreover, 
if $\delta$ is harmonic and
$\nu(\mathcal{D}) = 0$, then $\nu$ is a harmonic measure.  
\end{theorem}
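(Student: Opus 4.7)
The proof adapts the construction of Goodman--Plante's Proposition~\ref{averaging} by replacing counting measures with cocycle-weighted probability measures. For each $n$, I take $\nu_n$ to be the probability measure concentrated on $A_n$ with atoms $\nu_n(\{z\}) = \delta(z,y_n)/\poids{A_n}{y_n}$. Since $T$ is compact, Banach--Alaoglu applied to $C(T)^\ast$ gives a subsequence $\nu_{n_k}$ converging weakly-$\ast$ to a Borel probability measure $\nu$ on $T$. The inclusion $\mathrm{supp}(\nu)\subset\lim_n A_n$ follows from the portmanteau theorem: any point outside the limit set has an open neighborhood disjoint from $A_{n_k}$ for all large $k$, hence of zero $\nu_{n_k}$-mass and therefore zero $\nu$-mass.

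For part (i), I verify that $\nu$ is quasi-invariant with Radon--Nikodym cocycle $\delta$. The condition to check is that, for each generator $\gamma\in\Gamma^{(1)}$ with domain $U_\gamma$ and range $V_\gamma$ and each $f\in C(T)$ compactly supported in $V_\gamma$,
\[
\int_T f\,d\nu \;=\; \int_T f(\gamma(y))\,\delta(\gamma(y),y)\,d\nu(y).
\]
Using the cocycle identity $\delta(y,y_n)\delta(\gamma(y),y)=\delta(\gamma(y),y_n)$ and the change of index $z=\gamma(y)$, the two integrals against $\nu_n$ become weighted sums of $\delta(z,y_n)f(z)$ over $A_n\cap V_\gamma$ and over $\gamma(A_n\cap U_\gamma)$, respectively. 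The symmetric difference of these two index sets is contained, after applying $\gamma^{-1}$, in $\Delta_\gamma A_n$; the continuity of $\delta$ on the compact set $T$ bounds $\delta(\gamma(\cdot),\cdot)$ on $U_\gamma$ by a uniform constant $C$, so the weights transported by $\gamma$ satisfy $\delta(\gamma(y),y_n)\le C\,\delta(y,y_n)$. The discrepancy between the two integrals is then bounded by $2C\,\norminf{f}\,\poids{\Delta_\gamma A_n}{y_n}/\poids{A_n}{y_n}$, which vanishes by the $\delta$-averaging hypothesis. Passing to the weak-$\ast$ limit delivers the identity for $\nu$.

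For part (ii), the measure $\nu$ produced in (i) is quasi-invariant with Radon--Nikodym cocycle equal to $\delta$. Since $\delta$ is harmonic everywhere by hypothesis, it is in particular harmonic $\nu$-almost everywhere, so condition (iii) of Paulin's proposition (recalled above) holds and the proposition yields that $\nu$ is harmonic. The hypothesis $\nu(\mathcal{D})=0$ makes the Markov operator $D$ unambiguous in the $\nu$-a.e.\ sense underlying Paulin's equivalence: outside the discontinuity set $\mathcal{D}$ of the degree function, $D$ coincides with its continuous extension, and the pointwise harmonic equation for $\delta$ translates into the required $\nu$-a.e.\ statement without measurability obstructions.

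The only technically delicate step is the bookkeeping in the quasi-invariance estimate: one must carefully identify the symmetric difference of the index sets $A_n\cap V_\gamma$ and $\gamma(A_n\cap U_\gamma)$ with a subset of $\Delta_\gamma A_n$, and follow the cocycle weights through the change of variable $z=\gamma(y)$. Once this calculation is in place, the rest of the proof—extracting a weak-$\ast$ limit, identifying the support, and invoking Paulin's characterization for harmonicity—proceeds by standard weak-convergence arguments.
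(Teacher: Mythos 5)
Part (i) of your proposal is correct and follows the paper's own route: weighted atomic measures $\nu_n$, a weak-$\ast$ limit, and an estimate of the discrepancy by $\poids{\Delta_\gamma A_n}{y_n}/\poids{A_n}{y_n}$ (your extra constant $C$ coming from the continuity of $\delta$ is harmless). The problem is part (ii), where your argument has a genuine gap.

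You dispose of (ii) by citing the implication (iii)$\Rightarrow$(i) of Paulin's proposition: $\delta$ is harmonic everywhere, hence $\nu$-a.e., hence $\nu$ is harmonic. Notice that this argument makes no use whatsoever of the hypothesis $\nu(\mathcal{D})=0$ --- your paragraph about it "making the Markov operator unambiguous" does not attach to any actual step, so you are in effect claiming a strictly stronger theorem in which that hypothesis is deleted. That should have been a warning sign. The hypothesis is used at a precise and unavoidable point: $\nu$ is only a weak-$\ast$ limit of the $\nu_n$, so one can conclude $\int g\,d\nu_n\to\int g\,d\nu$ only for $g$ continuous (or bounded and continuous $\nu$-a.e.); the function $\Delta f = Df - f$ is discontinuous exactly where the degree function is, and $\nu(\mathcal{D})=0$ is what licenses $\int\Delta f\,d\nu=\lim_n\int\Delta f\,d\nu_n$. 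Your proof never establishes any relation between $\int\Delta f\,d\nu$ and the approximating sums, which is the whole content of (ii).

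Moreover, invoking (iii)$\Rightarrow$(i) as a black box is precisely the step one should be suspicious of here. For the \emph{simple} random walk the stationary measure on a graph is degree-biased: on a finite star graph the measure with constant Radon--Nikodym cocycle $\delta\equiv 1$ is the uniform measure, which is not $D^\ast$-invariant, while the $D^\ast$-invariant measure is proportional to $deg$. So the pointwise harmonicity of $\delta$ in the sense $\delta(z,y)=\frac{1}{deg(z)}\sum_{w\sim z}\delta(w,y)$ does not transparently convert into $\int\Delta f\,d\nu=0$ when $deg$ is non-constant; the two conditions must be reconciled by an actual computation. This is why the paper does not simply quote the proposition: it substitutes $\delta(y,y_n)=\frac{1}{deg(y)}\sum_{z\sim y}\delta(z,y_n)$ into $\int\Delta f\,d\nu_n$, rewrites the result as a sum over oriented edges of the antisymmetric expression $f(z)\delta(y,y_n)-f(y)\delta(z,y_n)$, observes that the contributions of edges interior to $A_n$ cancel, and bounds the remaining boundary contributions by $\sum_{\gamma\in\Gamma^{(1)}}\poids{\Delta_\gamma A_n}{y_n}/\poids{A_n}{y_n}\to 0$. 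Your proposal contains none of this; to repair it you must either reproduce that edge-cancellation computation or give an honest verification, in this setting and with these conventions, of the implication you are citing --- together with the limit interchange that requires $\nu(\mathcal{D})=0$.
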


\begin{proof}
We start by constructing a sequence of probability measures $\nu_n$ given by
$\nu_n(B) = \poids{B \cap A_n}{y_n} / \poids{A_n}{y_n}$
for every Borel subset $B$ of $T$. By passing to a subsequence, the sequence $\nu_n$ converges in the weak topology to a positive Borel measure $\nu$ on $T$. First, we will prove that $\nu$ is a quasi-invariant measure having a Radon-Nikodym cocycle equal to $\delta$. For every local transformation $\gamma \in \Gamma$ an every function $f \in C(T)$ with support on the range of $\gamma$, we have
$$
\int \, f(z) \, d(\gamma_\ast \nu)(z)  = \int \, f(\gamma(y)) \,
d\nu(y)  =  \lim_{n \to \infty} \frac{1}{\poids{A_n}{y_n}} \sum_{y \in
  A_n}  f(\gamma(y))\delta(y,y_n)
$$
and 
\begin{eqnarray*}
\int \, f(y) \delta(z,y) \, d\nu(y)  & = & \lim_{n \to \infty}
\frac{1}{\poids{A_n}{y_n}} \sum_{y \in A_n}  f(y)\delta(\gamma(y),y)\delta(y,y_n) \\
 & = & \lim_{n \to \infty} \frac{1}{\poids{A_n}{y_n}} \sum_{y \in A_n}
 f(y)\delta(\gamma(y),y_n)
 \end{eqnarray*}
where $z = \gamma(y)$. Therefore 

\begin{eqnarray*}
0 &  \leq & \left| \int \, f(z) \, d(\gamma_\ast \nu)(z) -  \int \, f(y) \delta(z,y) \, d\nu(y) \,  \right| \\
& \leq & \lim_{n \to \infty} \frac{\quad 1 \quad}{\poids{A_n}{y_n}} \,
\left| \sum_{y \in A_n}  f(\gamma(y))\delta(y,y_n) - 
 f(y)\delta(\gamma(y),y_n) \,  \right|  \\
& \leq & \lim_{n \to \infty}  \norminf{f}  \frac{\poids{\Delta_\gamma A_n}{y_n} }{\poids{A_n}{y_n}}  \ = \ 0
 \end{eqnarray*}
and thus
$$\int \, f(z) \, d(\gamma_\ast \nu)(z)  = \int \, f(y) \delta(z,y) \, d\nu(y),$$
proving the claim. 
\medbreak

We will now prove that if $\delta$ is harmonic and 
$\nu(\mathcal{D}) = 0$, then $\nu$ is a harmonic measure. 
Observe that if  $\nu(\mathcal{D}) = 0$, then $\Delta f$ is continuous $\nu$-almost everywhere and therefore
$$
\int \, \Delta f  \, d\nu = \lim_{n \to \infty} \int \, \Delta f \, d\nu_n
$$
for all $f \in C(T)$. If $\delta$ is harmonic, we have
\begin{eqnarray*}
 \int  \, \Delta f(y) \,  d\nu_n(y)  & = &  \frac{1}{\poids{A_n}{y_n}} 
\sum_{y \in A_n}  \left( \frac{1}{deg(y)} \sum_{z \sim y} f(z) - f(y) \right) \delta(y,y_n) 
 \end{eqnarray*}
\begin{eqnarray*}
& = &  \frac{1}{\poids{A_n}{y_n}} 
\sum_{y \in A_n} \frac{1}{deg(y)}  \sum_{z \sim y}  f(z) \delta(y,y_n) - f(y) \left( \frac{1}{deg(y)} 
\sum_{z \sim y} \delta(z,y_n) \right)  \\
\hspace*{-3em} & = & \frac{1}{\poids{A_n}{y_n}} 
\sum_{y \in A_n}   \frac{1}{deg(y)}  \sum_{z \sim y} f(z) \delta(y,y_n) -  f(y) \delta(z,y_n)  
 \end{eqnarray*}
and then 
\begin{eqnarray*}
\hspace{0.7cm} 0 & \leq & \left| \int \, \Delta f  (y) \, d\nu(y) \right| \\
& \leq &  \lim_{n \to \infty}  \frac{1}{\poids{A_n}{y_n}}  
\,  \left| \sum_{y \in A_n}  \sum_{z \sim y}Ê
 f(z) \delta(y,y_n) -  f(y) \delta(z,y_n) \, \right| \\
& \leq & \lim_{n \to \infty} \norminf{f}   
 \sum_{\gamma \in \Gamma^{(1)}}Ê
\frac{\poids{\Delta_\gamma A_n}{y_n} }{\poids{A_n}{y_n}} \  \leq \ 
 \lim_{n \to \infty} 2 \, \norminf{f}  \,  |  \Gamma^{(1)} | \, 
\frac{ \poids{\partial A_n}{y_n} }{ \poids{A_n}{y_n} } \ = \ 0, 
 \end{eqnarray*}
that is $\nu$ is a harmonic measure. 
\end{proof}

A similar result can be found in \cite{schapira}. In general, the second part of Theorem~\ref{discreteharmonic} remains valid when the Laplace operator 
$\Delta$ preserves continuous functions. 
This is always true when $\mathcal{D} = \emptyset$, as in the following case: 

\begin{corollary} \label{discreteharmoniccor1}
 Let $\R$ be the orbit equivalence relation defined by a group of
 finite type $\Gamma$ acting freely on a compact space $T$.  Let
 $\delta : \R \to \rr^\ast_+$ be a continuous harmonic cocycle. Any
 $\delta$-averaging sequence $\{ A_n \}$ gives rise to a harmonic
 measure $\nu$ on $T$ supported by the limit set of $\{ A_n  \}$. \qed
 \end{corollary}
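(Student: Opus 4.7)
The plan is to derive this as a direct consequence of part (ii) of Theorem~\ref{discreteharmonic}. The only new feature compared to the theorem is the replacement of a pseudogroup by a finitely generated group acting freely on $T$. The key observation is that this hypothesis forces the degree function $deg$ to be constant on $T$, hence in particular continuous.

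More precisely, first I would fix a finite symmetric generating set $S \subset \Gamma$, and recall that the graphing of $\R$ associated to $S$ puts an edge between $y$ and $z$ whenever $z = s(y)$ for some $s \in S$. Because the action is by a group (every generator is globally defined on $T$) and is free (no $s \in S \setminus \{1\}$ fixes any point, so distinct generators produce distinct neighbours), each orbit $\R[y]$ is canonically identified with the Cayley graph of $\Gamma$ with respect to $S$, and every vertex has exactly $|S|$ neighbours. Consequently $deg \equiv |S|$ on $T$, and the discontinuity set $\mathcal{D}$ of $deg$ is empty.

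Now I would invoke Theorem~\ref{discreteharmonic}. Part~(i) produces, from the $\delta$-averaging sequence $\{A_n\}$, a Borel measure $\nu$ on $T$ whose support is contained in $\lim_{n\to\infty} A_n$, which is quasi-invariant with Radon--Nikodym cocycle $\delta$. Since $\mathcal{D} = \emptyset$ we automatically have $\nu(\mathcal{D}) = 0$, and by hypothesis $\delta$ is harmonic, so the additional assumptions of part~(ii) are satisfied; hence $\nu$ is a harmonic measure, which is exactly the conclusion of the corollary.

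There is no real obstacle here: the proof is essentially a verification that the setting of the corollary is a special case in which the extra hypothesis ``$\nu(\mathcal{D}) = 0$'' of Theorem~\ref{discreteharmonic}(ii) is trivially satisfied. The only point that deserves a line of justification is the constancy of $deg$, which uses both that $\Gamma$ is a group (so generators act everywhere) and that the action is free (so they act with distinct images at each point).
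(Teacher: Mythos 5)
Your proof is correct and follows exactly the route the paper intends: the corollary is stated with an immediate \qed precisely because, for a finitely generated group acting freely, the degree function is constant (each orbit being a copy of the Cayley graph), so $\mathcal{D}=\emptyset$ and Theorem~\ref{discreteharmonic}(ii) applies directly. Your explicit justification of the constancy of $deg$ via freeness is the only detail the paper leaves implicit.
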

 
\noindent
Arguing as for usual averaging sequences, we can extend Theorem~\ref{discreteharmonic} to any compactly generated pseudogroup $\Gamma$ acting freely on a locally compact Polish space $T$. 
Moreover, in the $0$-dimensional case, the degree function is again continuous.
This applies in particular to solenoids \cite{benedettigambaudo} and laminations defined by repetitive graphs (introduced in \cite{ghyspano} and studied in \cite{alcaldelozanomacho}, \cite{theseblanc} and \cite{lozanoexample}): 

\begin{corollary} \label{discreteharmoniccor2}
 Let $\R$ be the orbit equivalence relation defined by compactly
 gene\-rated pseudogroup $\Gamma$ acting freely  on a locally compact separable $0$-dimensional space $T$.  Let $\delta : \R \to \rr^\ast_+$ be a continuous harmonic cocycle. Any $\delta$-averaging sequence $\{ A_n \}$ gives rise to a harmonic measure $\nu$ on $T$ supported by the  limit set of $\{ A_n \}$. \qed
 \end{corollary}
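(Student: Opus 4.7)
The plan is to combine two ingredients that are essentially already in the paper. First, reduce from the non-compact space $T$ to a compact subset in exact parallel with the reduction carried out for Proposition~\ref{averaging} at the end of Section~\ref{Sgandp}. Second, observe that $0$-dimensionality forces the degree function $deg$ to be locally constant, which makes the discontinuity set $\mathcal{D}$ empty and hence removes the obstruction $\nu(\mathcal{D})=0$ required by part (ii) of Theorem~\ref{discreteharmonic}.

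For the reduction step, compact generation of $\Gamma$ furnishes a relatively compact open $T_1\subset T$ meeting every orbit, and by the Lozano Rojo / Alvarez--Candel results cited in Section~\ref{Spre} we may choose a compact generating system so that the fundamental equivalence between $\Gamma$ and $\Gamma\big|_{T_1}$ is Kakutani. Setting $K=\overline{T_1}$, the intersection $A_n\cap K$ is a $\delta$-averaging sequence for $\Gamma\big|_K$: the quasi-isometry between the orbit graphs of $\Gamma$ and $\Gamma\big|_K$ distorts the weighted boundary ratio $\poids{\Delta_\gamma(\cdot)}{y_n}/\poids{\cdot}{y_n}$ only by bounded multiplicative constants, so the limit $0$ is preserved. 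This is the same mechanism used in the non-compact case of Proposition~\ref{averaging}.

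For the continuity of $deg$, local compactness, separability and total disconnectedness of $T$ give a basis of compact open sets. After replacing each generator $\gamma\in\Gamma^{(1)}$ by finitely many of its restrictions to compact open subsets of its domain (which preserves compact generation), we may assume every $\gamma\in\Gamma^{(1)}$ has clopen domain and clopen range. For each $y\in T$ the set $\{\gamma\in\Gamma^{(1)}\,:\,y\in\mathrm{dom}(\gamma)\}$ is then locally constant in $y$, so $deg$ is locally constant on $T$ and hence continuous; consequently $\mathcal{D}=\emptyset$, so $\nu(\mathcal{D})=0$ automatically for any Borel measure $\nu$.

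With these two ingredients in place, applying Theorem~\ref{discreteharmonic} to the finitely generated pseudogroup $\Gamma\big|_K$ acting on the compact space $K$, with the $\delta$-averaging sequence $\{A_n\cap K\}$ and the continuous harmonic cocycle $\delta\big|_{\R|_K}$, produces a quasi-invariant harmonic Borel measure $\nu_K$ on $K$ whose support lies in the limit set. Finally, extend $\nu_K$ to a unique $\Gamma$-quasi-invariant Borel measure $\nu$ on $T$, finite on compact sets, by translation along $\Gamma$ with the cocycle $\delta$, exactly as in the extension procedure of Section~\ref{Spre}; since the Laplace operator $\Delta$ acts locally on the orbit graph and the extension is $\Gamma$-equivariant with weight $\delta$, the identity $\int\Delta f\,d\nu=0$ inherits from $\nu_K$, giving harmonicity of $\nu$. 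The main (and really the only delicate) point is the transfer of the $\delta$-weighted Følner condition under the Kakutani reduction; the continuity of $deg$ and the final extension are then routine.
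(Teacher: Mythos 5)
Your proposal follows exactly the route the paper itself indicates for this corollary (which is stated with only a one-sentence justification: ``arguing as for usual averaging sequences\ldots in the $0$-dimensional case the degree function is again continuous''): reduce to the compact set $K=\overline{T_1}$ via the Kakutani equivalence as in Section~\ref{Sgandp}, and use $0$-dimensionality to make $deg$ locally constant so that $\mathcal{D}=\emptyset$ and part (ii) of Theorem~\ref{discreteharmonic} applies. You supply considerably more detail than the paper does, but the strategy is essentially identical.
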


In order to extend Theorem~\ref{discreteharmonic} to non-free actions, we can adopt two different
strategies. Let us first recall that the notion of equivalence
relation is enough to describe the transverse structure of a
lamination in the Borel context. More precisely, any Borel or
topological lamination $\F$ induces a Borel equivalence relation $\R$
on a total transversal $T$ (compare to Remark~\ref{etale}) defined by
the action of the holonomy pseudogroup. We refer to the Ph.D. thesis
of M. Berm\'udez \cite{thesebermudez} for the definition of a Borel lamination. 
If $\R$ is a discrete Borel equivalence relation defined by the action
of a Borel pseudogroup $\Gamma$ acting on a compact
space $T$ and if $\delta : \R \to \rr^\ast_+$ is a Borel cocycle, then
the proof of Theorem~\ref{discreteharmonic} remains valid.
In the topological context, Theorem~\ref{discreteharmonic} is not exactly equivalent to the situation above because the transverse holonomy groupoid and the equivalence relation are only Borel isomorphic on the residual set of leaves without holonomy. Another strategy consists in
replacing \'etale equivalence relations with \'etale groupoids, and proving that averaging sequences 
for stationary cocycles define stationary measures on groupoids. Details will be reported elsewhere. 

\setcounter{equation}{0}

%%%%%%%%%%%%%%%%%%%%%%%%%%%%%%%%%%
\section{Averaging sequences in the continuous setting} 
\label{sectionaveragingcontinuous}

We are interested in stating Theorem \ref{discreteharmonic}
in the continuous setting,  namely for a compact laminated space $(M,\F)$. Instead of working with quasi-invariant measures, we are going to use tangentially smooth measures. These form a larger class than harmonic measures. As previously mentioned, transverse invariant measures for foliations are rather rare, but harmonic measures always exist.  Harmonic measures were introduced by L. Garnett in \cite{garnett}. In Sections \ref{medidas} and \ref{medidasharmonicas} we will 
study these measures and recall some notation. In Section 
\ref{sectionmodular} we will construct a differential foliated 1-form from a given cocycle. Finally, in Section \ref{sectioncontinuous} we will use this foliated form to prove 
the continuous analogue of Theorem \ref{discreteharmonic}.

\subsection{Tangentially smooth measures}
\label{medidas}

Consider now a regular Borel measure $\mu$  on $M$. Using a $C^r$ foliated atlas $\A$, we can give a local decomposition $\mu=\int \lambda_i^yd\nu_i(y)$ on each foliated chart $U_i$, where $\lambda_i^y$ is a measure on the plaque $\varphi_i^{-1}(P_i \times \{y\})$ and $\nu_i$ a measure on $T_i$. 
Here,  in order to define the foliated Laplace operator $\Delta_\F$, we can always assume that $r \geq 3$ up to $C^1$-equivalence of foliated atlases, and we fix a tangentially $C^r$-smooth Riemannian metric $g$ along the leaves of $\F$.

\begin{definition}[\cite{alcalderechtman}]
\label{definiciontangencialmentelisa}
A measure $\mu$ on $M$ is {\em tangentially smooth} if for every $i\in I$ and $\nu_i$-almost every $y\in T_i$, the measures $\lambda_i^y$ are absolutely continuous with respect to the Riemannian volume $dvol$ restricted to the plaque passing through $y$, and the density functions
$h_i(x,y)=d\lambda_i^y / dvol(x,y)$
are smooth functions of class $C^{r-1}$ on the plaques.
\end{definition}
 
Observe that the local decomposition of $\mu$ is not necessarily unique. Let
 $\mu|_{U_i}=\int \lambda_i^yd\nu_i(y)=\int \bar{\lambda}_i^yd\bar{\nu}_i(y)$
 be two decompositions. Then we obtain 
 $$
\int_{T_i} \int_{P_i\times\{y\}} \, h_i(x,y) \, dvol(x,y) \, d\nu_i(y)=\int_{T_i }\int_{P_i\times \{y\}}
\, \bar{h}_i(x,y) \, dvol(x,y) \, d\bar{\nu}_i(y),
$$
and we can consider the Radon-Nikodym derivative $\delta_i(y)= d\nu_i / d\bar{\nu}_i(y)$ such that
$\bar{h}_i(x,y) =\delta_i(y)h_i(x,y)$.
This situation arises naturally in the intersection of two foliated charts $U_i$ and $U_j$.  Indeed, if $U_i \cap U_j \neq \emptyset$, 
 we have that
$\mu|_{U_i\cap U_j}=\int \lambda_i^yd\nu_i(y)=\int \lambda_j^yd\nu_j(y).$
Thus, as before, we deduce that
\begin{equation} \label{eqcocycle1}
\delta_{ij}(y)  = d\nu_i / d((\gamma_{ji})_*\nu_j) (y) = 
 \frac{h_j(\varphi_{ij}^y(x), \gamma_{ij}(y))}{h_i(x,y)}.
\end{equation}
Then the functions $h_i$ verify that $\log h_j-\log h_i=\log \delta_{ij}$ on $U_i\cap U_j$. Since $\delta_{ij}$ is a function on $T_i$, we have that
 $ d_\mathcal{F}\log h_i=d_\mathcal{F}\log h_j$.
Then $\eta= d_\mathcal{F}\log h_i$ is a well-defined foliated $1$-form of class $C^{r-2}$ along the leaves, which makes possible to estimate the transverse measure distortion under the holonomy. 

\begin{definition}
 \label{definicionformamodular}
 The foliated $1$-form $\eta$ is the {\em modular form} of $ \mu$.
 \end{definition}

\subsection{Harmonic measures} \label{medidasharmonicas}
 We start by recalling the definition given by L. Garnett in \cite{garnett}:

\begin{definition} 
We will say that $\mu$ is {\em harmonic} if $\int \Delta_{\mathcal{F}} f d\mu = 0$ for every conti\-nuous tangentially $C^{r-1}$-smooth function $f : M \to \rr$. 
\end{definition} 
 
\noindent
According to Theorem 1 of \cite{garnett}, any harmonic measure is an
example of tangentially smooth measure since the densities $h_i$ are
positive harmonic functions of class $C^{r-1}$ on the plaques.  In
particular, any transverse invariant measure combined with the
Riemannian volume on the leaves gives a harmonic measure which is
called {\em completely invariant}. A harmonic measure $\mu$ is
completely invariant if and only if $\eta=0$ (we refer to corollary
5.5 of A. Candel's paper \cite{candel}).  In the general harmonic
case, the following proposition states some properties of the modular form. This proposition is a refined version of Lemma 4.19 on page 116 of the Ph.D. thesis of B. Deroin \cite{thesederoin}. 

\begin{proposition}[\cite{thesederoin}] \label{harmonicdensity}
If $\mu$ is a harmonic measure, then $\eta$ is a bounded foliated $1$-form which admits a uniformly tangentially Lipschitz primitive $\log h$ on the residual set of leaves without holonomy.
\end{proposition}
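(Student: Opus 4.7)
The plan is to establish boundedness of $\eta$ via a uniform Harnack estimate on the local harmonic densities, then construct the primitive $\log h$ on every leaf without holonomy by coherently patching the $h_i$, and finally deduce the Lipschitz estimate from the bound on $\eta$.

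First I would prove that $\eta$ is bounded. Since $\mu$ is harmonic, each local density $h_i(\cdot,y)$ is a positive $C^{r-1}$ harmonic function on the plaque through $y$ with respect to the leafwise metric $g$. Because $\A$ is good, every $U_i$ is relatively compact in a larger foliated chart $\widetilde{U}_i$, so the plaques of $\widetilde{U}_i$ have uniformly bounded geometry with metrics depending continuously on the transverse parameter. Applying Harnack's inequality on the enlarged plaques yields a constant $C_i$ with $|\nabla \log h_i(\cdot,y)| \le C_i$ throughout $\varphi_i^{-1}(P_i \times \{y\})$, uniformly in $y$. Since only finitely many $U_i$ are involved, taking $C = \max_i C_i$ gives $\norminf{\eta} \le C$.

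Next I would build the primitive on the residual saturated set $M_0 \subset M$ of leaves without holonomy. After fixing once and for all a Borel disintegration $\mu|_{U_i} = \int \lambda_i^y\, d\nu_i(y)$ on each chart, I would proceed on a leaf $L \subset M_0$ as follows: pick a basepoint $p_0 \in L \cap U_{i_0}$, set $h := h_{i_0}$ on the plaque through $p_0$, and extend $h$ along $L$ by the multiplicative cocycle rule: whenever a leafwise path crosses into an adjacent chart $U_{i_{k+1}}$, redefine $h := \delta_{i_k i_{k+1}}^{-1}(y_k)\cdot h_{i_{k+1}}$ on the next plaque, where $\delta_{i_k i_{k+1}}(y)$ is the transition function of \eqref{eqcocycle1}. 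The absence of holonomy on $L$ ensures that any leafwise loop produces the identity germ on the transversal, so the corresponding product of transition functions evaluated around the loop equals $1$, and $h$ is well-defined on the whole of $L$. Locally it agrees with $h_i$ up to a multiplicative constant, hence $d_\F \log h = d_\F \log h_i = \eta|_L$.

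Finally, the uniform Lipschitz estimate follows directly from the first step: for any leafwise rectifiable curve $\gamma : [0,1] \to L$,
\begin{equation*}
|\log h(\gamma(1)) - \log h(\gamma(0))| = \left| \int_\gamma \eta \right| \le C \cdot \mathrm{length}(\gamma),
\end{equation*}
so minimizing over curves joining two points of $L$ gives $|\log h(p)-\log h(q)| \le C \cdot d_L(p,q)$ with the same $C$ for every leaf of $M_0$. The main obstacle is the global gluing of the second step: it hinges on fixing a coherent Borel disintegration of $\mu$ and on verifying that the transition cocycle $\delta_{ij}$ is trivial around loops in a holonomy-free leaf. Without that coherence the primitive would only exist on simply connected leafwise domains, and the multiplicative ambiguity across loops would destroy the uniformity of the Lipschitz constant across the leaves of $M_0$.
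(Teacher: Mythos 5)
Your argument is correct and follows essentially the same route as the paper's proof: Harnack estimates for the positive harmonic densities $h_i$ on plaques that are relatively compact in the plaques of a larger good atlas, with uniformity coming from the finiteness of the atlas, together with the gluing of the $\log h_i$ (which differ by additive constants on overlapping plaques) into a primitive of $\eta$ on each leaf without holonomy. The only cosmetic difference is that you invoke the gradient form of Harnack's inequality to bound $|\nabla \log h_i|$ directly and then integrate $\eta$ along leafwise curves, whereas the paper uses the ratio form~(\ref{Harnack}) to control the oscillation of $\log h_i$ on each plaque; both yield the uniform tangential Lipschitz estimate.
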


\begin{proof} Let $\mathcal{A}=\{(U_i,\phi_i)\}_{i\in I}$ be a good $C^r$ foliated atlas
of $(M,\F)$, and $h_i$ the local density functions of $\mu$. Let us first observe that since the functions $h_i$ coincide on the intersections of the plaques modulo multiplication by a constant, they define a primitive of the induced $1$-form on the holonomy covering of each leaf $L$. If $\mathcal{F}$ has no essential holonomy, the functions $\log h_i$ can be glued together to obtain a measurable global primitive $\log h$ of $\eta$. In general, the modular form $\eta$ admits a continuous primitive $\log h$ on the residual set of leaves without holonomy. Now, let us assume
that $\mathcal{A}$ is a refinement of a good atlas 
$\mathcal{A}^\prime = \{(U^\prime_i,\phi^\prime_i)\}_{i\in I}$, 
and $h_i^\prime$ are the corresponding local densities. 
Thus, every plaque of $U_i$ is relatively compact in a plaque of $U_i^\prime$. 
In fact, using a vertical reparametrization, we can suppose that 
$\phi_i^{-1}(P_i \times \{y\})  \subset (\phi_i^\prime)^{-1}(P_i^\prime \times \{y\})$ for every $y\in T_i$. There exists a relatively compact open set $V \subset P_i^\prime$ such that
$ \phi_i^{-1}(P_i \times \{y\}) \subset (\phi_i^\prime)^{-1}(V \times \{y\})$
for every $y\in T_i$. Since $h_i$ is harmonic, the Harnack inequality implies the existence of a constant $C_i>0$ such that
\begin{equation} \label{Harnack}
 \frac{1}{C_i} \leq \frac{h_i(x,y)}{h_i(x_0,y)} \leq C_i ,
\end{equation}
for all $x, x_0\in P_i$ and for all $y\in T_i$. Since the atlases $\mathcal{A}$ and $\mathcal{A}^\prime$ are finite, the primitive $\log h$ is uniformly Lipschitz in the tangential coordinate $x$.
\end{proof} 

\subsection{Modular form associated to a cocycle}\label{sectionmodular}

We will now describe how to cons\-truct a modular form $\eta \in \Omega^1(\F)$ from a Borel or 
continuous cocycle $\delta : \R \to \rr^\ast_+$. For simplicity, 
$\R$ is endowed here with the natural Borel or topological structure induced by the structure of Borel or topological groupoid on the transverse holonomy groupoid $G$ formed by the germs 
$< \! \gamma \! >_y$ of the elements $\gamma$ of $\Gamma$ at the points $y$ of their domains,  see \cite{mooreschochet}.
The natural projection $(\beta,\alpha) : < \! \gamma \! >_y \in G \mapsto (y,\gamma(y)) \in \R$ becomes an isomorphism of Borel or topological groupoids in restriction to the residual set of leaves without holonomy. Equivalently, we can consider a Borel or continuous cocycle $\delta : G \to \rr^\ast_+$ projectable on $\R$.
\medskip 

We start by considering tangentially $C^r$-smooth Borel or continuous functions 
$c_{ki} : U_i \cap U_k \to \rr$ given  by
$$c_{ki} (\varphi_k^{-1} (x,y)) = log \, \delta_{ki}(y)$$
where $\delta_{ki}(y) = \delta(y,\gamma_{ki}(y))$ for all $(x,y) \in P_k \times T_k$.  By choosing a tangentially $C^r$-smooth partition of unity $\{ \rho_i \}_{i=1}^m$ subordinated to the foliated atlas $\A$, we can glue the functions $c_{ki}$ obtaining  tangentially $C^r$-smooth Borel or continuous functions 
$c_i : U_i \to \rr$
given by
$$c_i = \sum_{k=1}^m \rho_k c_{ki}.$$
The cocycle condition implies that $c_{ij} = c_{kj} - c_{ki}$,
so that
$$
c_j - c_i =  \sum_{k=1}^m \rho_k c_{kj} - \sum_{k=1}^m \rho_k c_{ki} = \left(\sum_{k=1}^m \rho_k\right) c_{ij} =  c_{ij}.
$$
Hence, for each $i = 1, \dots, m$ we can define a tangentially $C^{r-1}$-smooth Borel or continuous foliated $1$-form 
$$\eta_i = \sum_{k=1}^m (d_\F \rho_k) \, c_{ki}$$
on $U_i$. Each local $1$-form $\eta_i$ is exact
$$
\eta_i = \sum_{k=1}^m (d_\F \rho_k) \, c_{ki} = d_\F c_i = d_\F \log h_i
$$
where $h_i = e^{c_i} : U_i \to \rr^\ast_+$ is a Borel or continuous function of class $C^r$ along the leaves.

\begin{proposition} \label{formamodular}
There is a well defined Borel or continuous closed foliated $1$-form $\eta \in \Omega^1(\F)$ such that
$\eta |_{U_i} = \eta_i$.
\end{proposition}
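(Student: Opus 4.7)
The plan is to verify that the local forms $\eta_i$ agree on overlaps, so that they patch together to a globally defined foliated $1$-form, and then to observe that closedness is automatic from local exactness.

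First I would compute the difference $\eta_j - \eta_i$ on $U_i \cap U_j$. Since we already showed $\eta_i = d_\F c_i$ and $\eta_j = d_\F c_j$, and that $c_j - c_i = c_{ij}$ on $U_i \cap U_j$, we get
\begin{equation*}
\eta_j - \eta_i = d_\F(c_j - c_i) = d_\F c_{ij}.
\end{equation*}
The key point is now that $c_{ij}$ is constant along plaques: by its very definition, $c_{ij}(\varphi_i^{-1}(x,y)) = \log \delta_{ij}(y)$ depends only on the transverse coordinate $y \in T_i$, since $\delta_{ij}$ is a function of the transverse variable (this reflects the fact that $\delta$ is a cocycle of $\R$, and $\R$ lives on the transversal). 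Consequently $d_\F c_{ij} = 0$, so $\eta_i = \eta_j$ on every non-empty intersection $U_i \cap U_j$, and the $\eta_i$ glue to a well-defined foliated $1$-form $\eta \in \Omega^1(\F)$ with $\eta|_{U_i} = \eta_i$.

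Next I would address regularity. Each local expression $\eta_i = \sum_{k} (d_\F \rho_k)\, c_{ki}$ is a finite sum in which the $d_\F \rho_k$ are tangentially $C^{r-1}$-smooth foliated $1$-forms (since the $\rho_k$ are tangentially $C^r$-smooth), and the coefficients $c_{ki}$ inherit their transverse regularity from $\delta$: if $\delta$ is Borel the $c_{ki}$ are Borel, and if $\delta$ is continuous the $c_{ki}$ are continuous. Hence $\eta$ is Borel or continuous accordingly, and tangentially $C^{r-1}$-smooth.

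Finally, closedness of $\eta$ follows at once from local exactness: on each $U_i$ we have $\eta|_{U_i} = d_\F \log h_i$, so $d_\F \eta|_{U_i} = d_\F^2 \log h_i = 0$. Since closedness is a local property and the $U_i$ cover $M$, $\eta$ is closed as a foliated $1$-form. The main thing to check carefully is the equality $\eta_i = \eta_j$ on overlaps, but once one notices that $c_{ij}$ is purely transverse this reduces to the trivial fact that $d_\F$ annihilates plaque-constant functions; everything else is formal.
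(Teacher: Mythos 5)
Your proof is correct and follows essentially the same route as the paper: both reduce to showing $\eta_j - \eta_i = 0$ on overlaps using the cocycle identity $c_{ij} = c_{kj} - c_{ki}$. The only cosmetic difference is that the paper factors $c_{ij}$ out of the sum and invokes $\sum_{k} d_\F \rho_k = d_\F(1) = 0$, whereas you use the previously established identity $\eta_i = d_\F c_i$ together with the plaque-constancy of $c_{ij}$; these are two equivalent ways of exhibiting the same cancellation.
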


\begin{proof}
For each pair $i,j \in \{1, \dots, m\}$, we have that: 
$$
\eta_j - \eta_i = \sum_{k=1}^m (d_\F \rho_k) \,  c_{kj}  -  \sum_{k=1}^m (d_\F \rho_k) \, c_{ki} = 
\left( \sum_{k=1}^m d_\F \rho_k\right) \, c_{ij} = 0
$$
on $U_i \cap U_j$. Then the $1$-form $\eta$ is well defined,  Borel or continuous, and closed.
\end{proof}

\begin{definition}
The foliated 1-form $\eta$ is the modular form of $\delta$.
\end{definition}

\begin{remarks}  \label{remmod2}
\begin{list}{\labelitemi}{\leftmargin=0pt}

\item[\, (i)] The modular form $\eta$ depends on the choice of the partition of unity, but its 
cohomology class does not depend. 

\item[\, (ii)] As for harmonic measures, the modular form $\eta$ of a
  Borel or continuous cocycle $\delta$ admits a Borel or continuous
  primitive $\log h$ on the residual set of leaves without
  holonomy. Thus, assuming that $\F$ has no holonomy (or passing to
  the holonomy covers of the leaves), we may find a global Borel or
  continuous primitive on $M$ (respectively, a Borel or continuous primitive on the holonomy groupoid $Hol(\F)$), see \cite{alcalderechtman}.
\end{list}
\end{remarks}

\subsection{Continuous averaging sequences} \label{sectioncontinuous}

In the present setting, we can reformulate the {\em Radon-Nikodym problem}  as the problem 
of determining tangentially smooth measures $\mu$ on $M$ which admit $\eta$ as their modular form. 
The aim of this section is to establish Theorem \ref{discreteharmonic}
for laminations. First, we need a continuous analogue of Definition \ref{definciondetalfolner}. 
Consider a $d$-dimensional lamination $\F$ of class $C^r$ on a compact space $M$, endowed with a tangentially $C^r$-smooth Riemannian metric  $g$, and a continuous cocycle
$\delta :  \R \to \rr_+^\ast$. The modular form $\eta$ admits a continuous tangentially $C^r$-smooth primitive $\log h$ on the residual set of leaves without holonomy. On each leaf 
without holonomy $L_y$ passing through $y \in T$, we can multiply $g$ by the normalized density function $h/h(y)$ in order to obtain a {\it modified metric} $(h/h(y))g$. 

\begin{definition}
\label{definicionfolner}
Let $\{ V_n \}$ be a sequence of compact domains with boundary contained in a sequence of leaves  without holonomy $L_{y_n}$. We will say that $\{ V_n \}$ is a {\em $\eta$-averaging sequence for $\F$} if
$$
\lim_{n \to \infty} \frac{\mbox{area}_\eta(\partial V_n)}{\mbox{vol}_\eta(V_n)} = 0
$$
where $\mbox{area}_\eta$ denotes the $(d-1)$-volume and $\mbox{vol}_\eta$ the $d$-volume with respect to the modified metric along $L_{y_n}$. A leaf $L_y$ is {\em $\eta$-F{\o}lner} if it contains an $\eta$-averaging sequence $\{V_n\}$ such that
$\mbox{area}_\eta(\partial V_n) / \mbox{vol}_\eta(V_n) \to 0$ as $n \to +\infty$.
\end{definition}

\begin{remarks} \label{harnackremark}

\begin{list}{\labelitemi}{\leftmargin=0pt}

\item[\, (i)]  The isoperimetric ratio $\mbox{area}_\eta(\partial V_n) / \mbox{vol}_\eta(V_n)$ does not depend on the  choice of $y$ nor $h$ in the second definition. 
This justifies the notation, which is slightly different from the one used in \cite{alcalderechtman}. 

\item[\, (ii) ]  When $\mu$ is a completely invariant harmonic measure, the normalized density function is equal to $1$ and thus the modified volume and the Riemannian volume coincide. Hence, we recover  the common definition of averaging sequence. 

\item[\, (iii)] 
For harmonic measures, Harnack's inequalities~(\ref{Harnack}) imply that
the modified volume of the plaques and the modified area of their boundaries remain uniformly bounded. 
\end{list}
\end{remarks}

\begin{theorem} \label{thharmonic}
Let $(M,\F)$ be a $C^r$ lamination of a compact space $M$, $1 \leq r \leq \infty$, and let 
$\R$ be the equivalence relation induced by $\F$ on a total transversal $T$. Consider a continuous cocycle $\delta :  \R \to \rr_+^\ast$, and let $\eta$ be the modular form of  $\delta$.  Assume that $\F$ admits a foliated atlas such that the modified volume of the plaques is bounded. Then:
\smallskip 

\noindent 
i) any $\eta$-averaging sequence $\{V_n\}$ for $\F$ gives rise to a tangentially smooth measure $\mu$ whose support  is contained in the limit set of $\{V_n\}$ and whose modular form is equal to $\eta$; 
\smallskip 

\noindent 
ii) moreover, 
if $\eta$ has a primitive $\log h$ such that $h$ is a harmonic function, then $\mu$ is a harmonic measure. 
\end{theorem}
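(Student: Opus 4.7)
The plan is to mimic the argument of Theorem~\ref{discreteharmonic} in the leafwise setting, replacing the weighted counting measures on orbits by modified Riemannian volume measures on the domains $V_n$. Concretely, I would fix a primitive $\log h$ of $\eta$ on each leaf $L_{y_n}$ normalized so that $h(y_n)=1$, and define the probability measures
$$
\mu_n(B) \;=\; \frac{\volh{B\cap V_n}}{\volh{V_n}} \;=\; \frac{1}{\volh{V_n}}\int_{B\cap V_n} h \, d\mbox{vol}_g
$$
on $M$. Compactness of $M$ and Riesz's representation theorem then yield a weakly convergent subsequence $\mu_{n_k}\to\mu$, with support of $\mu$ contained in the limit set of $\{V_n\}$ by construction.

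For (i), I would fix a good foliated atlas $\A=\{(U_i,\varphi_i)\}$ with local densities $h_i=e^{c_i}$ and observe that on any plaque $P$ of $U_i$ inside a leaf without holonomy, the two primitives $\log h$ and $\log h_i$ of $\eta$ differ by a constant, so $h|_P = c_P\, h_i|_P$ for some $c_P>0$. Splitting the plaques meeting $V_n$ into \emph{interior} plaques ($P\subset V_n$) and \emph{boundary} plaques, the integral of a continuous $f$ supported in $U_i$ decomposes as an integral against the transverse atomic measure
$$
\nu_{i,n} \;=\; \frac{1}{\volh{V_n}} \sum_{P \subset V_n} c_P\, \delta_{y_P}
$$
plus a boundary remainder. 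Extracting a further subsequence along which each $\nu_{i,n}$ converges weakly to some $\nu_i$ on $T_i$, and showing that the boundary remainder vanishes in the limit, gives the decomposition $\mu|_{U_i} = \int \bigl( h_i(x,y)\, d\mbox{vol}_g(x)\bigr)\, d\nu_i(y)$. This is exactly the tangentially smooth form of Definition~\ref{definiciontangencialmentelisa} with tangential density $h_i$, so by Proposition~\ref{formamodular} the modular form of $\mu$ is $d_\F\log h_i = \eta$ on each $U_i$.

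For (ii), given $f$ continuous and tangentially $C^{r-1}$ on $M$, I would apply Green's second identity on $L_{y_n}$ to write
$$
\int_{V_n}(\Delta_\F f)\,h \, d\mbox{vol}_g \;=\; \int_{V_n} f\, (\Delta_\F h)\, d\mbox{vol}_g \;+\; \int_{\partial V_n}\bigl(h\,d_\F f(\nu) - f\,d_\F h(\nu)\bigr)\, d\mbox{area}_g.
$$
The first term on the right vanishes since $\Delta_\F h = 0$ by hypothesis. For the boundary term, the identity $d_\F h = h\,\eta$ together with boundedness of $\eta$ (which is continuous on the compact space $M$) gives $|d_\F h(\nu)| \leq \norminf{\eta}\,h$, so the full boundary integral is bounded by $\bigl(\norminf{d_\F f} + \norminf{f}\norminf{\eta}\bigr)\areah{\partial V_n}$. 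Dividing by $\volh{V_n}$ and invoking the $\eta$-averaging hypothesis, I obtain $\int\Delta_\F f\, d\mu = \lim_n \int\Delta_\F f\, d\mu_n = 0$, i.e. $\mu$ is harmonic.

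The principal obstacle lies in the boundary plaque estimate in~(i): the hypothesis that the modified volume of plaques is uniformly bounded by some constant $V_{pl}$ is precisely what allows me to control the boundary remainder by $\|f\|_\infty\, V_{pl}\, \areah{\partial V_n}/\volh{V_n}$, which vanishes in the limit by the $\eta$-F{\o}lner condition. A secondary technicality is that $\Delta_\F f$ is in general only leafwise continuous, so passing to the limit $\int \Delta_\F f\, d\mu = \lim_n \int \Delta_\F f\, d\mu_n$ against a weakly convergent sequence of measures requires a localization on foliated charts similar to the one used in~(i); alternatively, one can directly integrate the bound for $|\int (\Delta_\F f)\,h\,d\mbox{vol}_g|$ obtained above, never invoking weak convergence against $\Delta_\F f$.
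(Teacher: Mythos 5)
Your proposal is correct and follows the paper's overall strategy: both construct $\mu$ as a weak limit of the normalized $h$-weighted leafwise volume measures on $V_n$ (the paper phrases this as a limit of foliated $d$-currents $\xi_n(\alpha)=\frac{1}{\mbox{vol}_\eta(V_n)}\int_{V_n}\frac{h}{h(y_n)}\,\alpha$), and both prove (ii) by Green's formula, $\Delta_\F h=0$, and the $\eta$-averaging condition. The genuine difference is in how (i) is organized. The paper passes to the traces $A_n=V_n\cap T$, shows that the bounded modified volume of plaques makes $\{A_n\}$ a $\delta$-averaging sequence, invokes Theorem~\ref{discreteharmonic} to obtain a quasi-invariant transverse measure $\nu$ with Radon--Nikodym cocycle $\delta$, combines $\nu$ with the modified leafwise volume into a tangentially smooth $\mu'$, and identifies $\mu=\frac{1}{c}\mu'$ by comparing plaque-by-plaque expansions; you instead extract the local transverse measures $\nu_i$ directly as weak limits of the atomic measures $\frac{1}{\volh{V_n}}\sum_{P\subset V_n}c_P\,\delta_{y_P}$, which bypasses the discrete theorem and yields the local decomposition (hence the modular form, via $d_\F\log h_i=\eta_i$) chart by chart --- sufficient for the statement, though it does not explicitly exhibit the global quasi-invariant $\nu$ with cocycle $\delta$ that the paper's route produces. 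Both arguments rest on the same estimate that neither fully writes out: that the total modified volume of the plaques meeting $\partial V_n$ is $O(\areah{\partial V_n})$, equivalently that $\volh{V_n}$ and $\poids{A_n}{y_n}$ are comparable up to boundary terms; you at least flag this as the principal obstacle, where the paper asserts it. Finally, in (ii) your estimate $|d_\F h(\nu)|\le\norminf{\eta}\,h$ on $\partial V_n$, giving $\bigl|\int_{\partial V_n}f\,\iota_{grad(h)}\,\omega\bigr|\le\norminf{f}\,\norminf{\eta}\,\areah{\partial V_n}$, is more robust than the paper's corresponding step, which bounds this term by $\norminf{f}\,\bigl|\int_{\partial V_n}\iota_{grad(h_n)}\,\omega\bigr|=0$ --- an inequality that is not literally valid --- while your version still kills the term after dividing by $\volh{V_n}$.
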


\begin{proof}
As in the discrete case, we will start by constructing a sequence of foliated $d$-currents
$$
\xi_n(\alpha) = \frac{1}{\mbox{vol}_\eta(V_n )} \int_{V_n}
\frac{h}{h(y_n)} \alpha,
$$
where $\alpha$ is a foliated $d$-form.
By passing to a subsequence, the sequence $\xi_n$ converges to a foliated $d$-current $\xi$. 
Let $\mu$ be the measure on $M$ associated with the current $\xi$. For every function $f \in C(T)$, we have 
$\int \, f \, d\mu = \xi (f \omega)$ 
where $\omega = dvol$ is the volume form along the leaves. 
\medbreak

Now, we will prove that $\mu$ is a tangentially smooth measure with
modular form $\eta$.  Consider a good $C^r$ foliated atlas
$\mathcal{A}=\{(U_i,\phi_i)\}_{i\in I}$ obtained by refinement from a
given good atlas, and whose plaques have bounded modified volume. 
As we mentioned before, up to $C^1$-equivalence, we can assume now
that $r \geq 3$. Since the modified volume of the plaques of
$\mathcal{A}$ and the modified area of their boundaries remain
bounded,  the traces $A_n = V_n \cap T$ of the domains $V_n$ on the
total transversal $T$ form a $\delta$-averaging sequence, as in
Definition \ref{definciondetalfolner}. In fact, since $V_n$ is covered by the plaques $P_y$ of $\mathcal{A}$ centered at the points $y$ of $A_n$, we have that: 
$$
\mbox{vol}_\eta(V_n) = \int_{V_n} \omega_\eta \leq \sum_{y \in A_n} \int_{P_y} \omega_\eta = 
\sum_{y \in A_n} \left( \int_{P_y} \, \frac{h(x,y)}{h(0,y)} dvol(x,y) \, \right) \delta(y,y_n)
$$
where $\omega_\eta$ is the modified volume form along the leaves and $h(x,y)$ denotes the density function  restricted to a  foliated chart $U_y$ containing the plaque $P_y$.  Then there is a constant 
$C > 0$ such that 
$\mbox{vol}_\eta(V_n) \leq C \poids{A_n}{y_n}$
Actually, we can choose $C > 0$ such that 
$\frac{1}{C} \leq \mbox{vol}_\eta(V_n) / \poids{A_n}{y_n} \leq C$.
Thus, by passing to a subsequence, we may assume that the ratio 
$\mbox{vol}_\eta(V_n) /\poids{A_n}{y_n}$ converges to a constant $c > 0$. 
Now, as stated in the proof of Theorem~\ref{discreteharmonic}, we may also assume that the sequence of measures $\nu_n(B) = \poids{B \cap A_n}{y_n} / \poids{A_n}{y_n}$
converge to a quasi-invariant measure $\nu$ on $T$ whose  Radon-Nikodym derivative is equal to 
$\delta$. Combined with the modified Riemannian volume along the leaves, 
this transverse measure gives us a tangentially smooth measure $\mu'$ on $M$. 
Thus, for every function 
$f \in C(M)$ with support 
 in $U_i$,  we have
$$
\int \, f \, d\mu'  =
 \int_{T_i}  \int_{P_i \times \{y\}}  \, f(x,y)  \, \frac{h_i(x,y)}{h_i(0,y)}  \, dvol(x,y)  \, d\nu(y).
 $$
Then
\begin{eqnarray} 
\int \, f \, d\mu' 
& = & \lim_{n \to +\infty} \frac{1}{\poids{A_n}{y_n}} \sum_{y \in V_n \cap T_i} 
\left( \int_{P_i \times \{y\}} \, f(x,y) \, \frac{h_i(x,y)}{h_i(0,y)}  \, \, dvol(x,y) \right) \, \delta(y,y_n) \nonumber \\
& = & \lim_{n \to +\infty} \frac{1}{\poids{A_n}{y_n}} \sum_{y \in V_n \cap T_i} \int_{P_i \times \{y\}} \, f \, \omega_\eta  \label{eq1}.
\end{eqnarray} 
On the other hand, by definition, we have
\begin{eqnarray}
\int \, f \, d\mu \quad  = \quad  \xi (f \omega) \nonumber 
& = &  \lim_{n \to +\infty} \frac{1}{\mbox{vol}_\eta(V_n )} \int_{V_n} \, f \omega_\eta  \nonumber \\ 
& = & \lim_{n \to +\infty} \frac{1}{\mbox{vol}_\eta(V_n )}  \sum_{y \in V_n \cap T_i}  \int_{P_i \times \{y\}} \, f \, \omega_\eta \label{eq2}
\end{eqnarray} 
Comparing identities~(\ref{eq1})~and~(\ref{eq2}), we deduce that $\mu = \frac{1}{c} \, \mu'$ is a tangentially smooth measure with 
modular form $\eta$.

\medbreak
To conclude, we will prove that $\mu$ is harmonic when $h$ is harmonic. 
We will start by denoting $h_n = h/h(y_n)$ the normalized density functions on the leaves $L_{y_n}$.
Since the Laplace operator $\Delta_\F$ preserves continuous functions, we have that
$$
\int \, \Delta_\F f \, d\mu = \lim_{n \to \infty} \frac{1}{\mbox{vol}_h(V_n )} \int_{V_n} (\Delta_\F f) \,  h_n \,  \omega,
$$
for all $f \in C(T)$. Green's formula implies that
$$
\int_{V_n}  \, (\Delta_\F f) \, h_n \, \omega  =  
\int_{V_n}  \, ( (\Delta_\F f ) \, h_n  - f \, (\Delta_\F h_n) \, \omega  = 
 \int_{\partial V_n}  \, h_n  \,\iota_{grad(f)} \omega -  f \,\iota_{grad( h_n)} \omega. 
$$
Since $h_n$ is harmonic, we have
$$
\int_{\partial V_n}  \,\iota_{grad(h_n)} \omega = \int_{ V_n}  \, \mbox{div}(grad(h_n)) \, \omega =  \int_{ V_n} \, (\Delta_\F h_n) \, \omega = 0
$$
and then 
$$
0 \leq \left| \int_{\partial V_n} f\iota_{grad(h_n)} \omega \, \right| 
\leq  \norminf{f}  \int_{\partial V_n}  \,\iota_{grad(h_n)} \omega = 0
$$
for all $n \in \mathbb{N}$. 
On the other hand, since $f$ is bounded, there exists a constant $k > 0$ depending only on $f$ such that we have
$$
0 \leq \left| \, \frac{1}{\mbox{vol}_h(V_n )}  \int_{\partial V_n}  \, h_n \,\iota_{grad(f)}  \omega \, \right| 
\leq  \lim_{n  \to \infty} k \frac{\mbox{area}_\eta (\partial V_n)}{\mbox{vol}_\eta(V_n )} = 0
$$
and therefore 
$$
\hspace{2.2cm} \int \, \Delta_\F f \, d\mu = \lim_{n \to \infty} \frac{1}{\mbox{vol}_h(V_n )} \int_{V_n} (\Delta_\F f) \,  h_n \,  \omega = 0, 
$$
that is $\mu$ is a harmonic measure. 
\end{proof}

\begin{remarks} 
\begin{list}{\labelitemi}{\leftmargin=0pt}

\item[\, (i)] If $\delta :  \R \to \rr_+^\ast$ is a  Borel cocycle with modular form $\eta$, Theo\-rem~\ref{thharmonic}  remains also valid. So any $\eta$-averaging sequence for $\F$ gives rise to a tangentially smooth measure $\mu$ that is harmonic when $\eta$ admits a primitive $\log h$ such that $h$ is a harmonic  function.

\item[\, (ii)]  According to Remark~ \ref{remmod2}.(ii), the notion of $\eta$-F{\o}lner  may be applied to the holonomy covers of the leaves of $\mathcal F$. Thus, it suffices to replace $\F$ with the lifted lamination in the holonomy groupoid $Hol(\F)$, in order to globalize the previous result. As in the discrete setting, details will be precised elsewhere.
\end{list}
\end{remarks}

\setcounter{equation}{0}

\section{Examples}
\label{Sex}

\subsection{Discrete averaging sequences for amenable non F\o lner actions.}
There are amenable actions of non amenable discrete groups whose
orbits contain avera\-ging sequences \cite{kaimexamples}. For example, let $\partial \Gamma$ be the space of ends of the free group $\Gamma$ with two generators $\alpha$ and $\beta$ whose elements are infinite words $x = \gamma_1\gamma_2\dots$ with 
letters $\gamma_n$ in $\Phi = \{ \alpha^{\pm 1},\beta^{\pm 1} \}$. If $\nu$ denotes the equidistributed probability measure on $\partial \Gamma$ (such that all cylinders consisting of infinite words with fixed first $n$ letters have the same measure), then $\Gamma$ acts essentially freely on $\partial \Gamma$ by sending 
each generator $\gamma$ and each infinite word $x = \gamma_1\gamma_2\dots $ to 
$\gamma.x = \gamma \gamma_1\gamma_2\dots $. 
Since this action is amenable, according to Theorem~2 of \cite{kaimcomptes} (see also Proposition 4.1 of \cite{alcalderechtman}), we know that $\nu$-almost every orbit is $\delta$-F\o lner (where $\delta$ is the Radon-Nikodym derivative of $\nu$). We will recall here an explicit construction by V. A. Kaimanovich in \cite{kaimexamples}.
\medbreak

For each $x\in \partial \Gamma$, let $b_x : \Gamma \to \rr$ be the {\em Busemann function} defined by 
$$b_x(\gamma) = \lim_{n \to +\infty} \left( d_\Gamma(\gamma,x_{[n]}) - d_\Gamma(1,x_{[n]}) \right)$$
where $d_\Gamma$ is the Cayley graph metric, $x_{[n]} $ is the word consisting of first $n$ letters of $x$ and $1$ is the identity element. The level sets 
$H_k(x) = \{ \, \gamma \in \Gamma \, / \, b_x(\gamma) = k \, \}$
are the {\em horospheres} centered at $x$. 
The Radon-Nikodym derivative of $\nu$ is given by 
$$
\delta(\gamma^{-1}.x,x) = \frac{d \gamma.\nu}{d\nu }(x) = 3^{-b_x(\gamma)}
$$
where $\gamma.\nu$ is the translation of $\nu$ by $\gamma$. Since $\poids{\cdot}{x} = \delta(\cdot,x)$ is a harmonic measure on $\Gamma.x$, $\nu$ is also a harmonic measure. In fact, as stated in Theorem~17.4 of \cite{kaimpoisson}, 
$\nu$ is the unique harmonic probability measure on $\partial \Gamma$. 
\medskip

Let $A_n^x$ be the set of all points $\gamma^{-1}.x$ in $\Gamma.x$
such that $0 \leq b_x(\gamma) = d_\Gamma(1,\gamma) \leq n$. Since 
$\poids{A_n^x \cap H_k(x)}{x} =  \sum_{b_x(\gamma) = d_\Gamma(1,\gamma) = k} \delta(\gamma^{-1}.x,x) = 3^k \frac{1}{3^k} = 1$
for all $0 \leq k \leq n$, we have that $\poids{A_n}{x} = n+1$. But 
$\partial A_n^x = \{1\} \cup (A_n^x \cap H_n(x))$
and so $\poids{\partial A_n^x}{x} = 2$. The $\delta$-averaging
sequence $\{ A_n^x \}$ defines a harmonic measure (which is equal to
$\nu$ up to multiplication by a constant). 

\subsection{Averaging sequences for hyperbolic surfaces.}  The geodesic and horocycle flows 
are classical examples of flows on the unitary tangent bundle of a compact hyperbolic surface. 
They are given by the right actions of the diagonal subgroup 
$$
D = \left\{ \ \matrice{e^{t/2}}{0}{0}{e^{-t/2}}  \Big| \ t \in \rr \ \right\}
$$
and the unipotent subgroup 
$$
H^+ = \left\{ \ \matrice{1}{s}{0}{1}  \Big|  \ s \in \rr \ \right\}
$$
of $G = PSL(2,\rr)$ on the quotient $\Gamma \backslash G$ by the left action of a uniform lattice 
$\Gamma$. If $\hh$ denotes the hyperbolic plane, we can identify
$\Gamma \backslash G$ with the unitary tangent bundle of the compact hyperbolic surface $\Gamma \backslash \hh$. The right action of the normalizer $A$
of $H^+$ in $PSL(2,\R)$ defines a foliation $\F$ by Riemann surfaces on $\Gamma \backslash G$. 
Since $A$ is an amenable group, $\F$ is an amenable non F\o lner foliation. Moreover, there is an $A$-invariant measure $\mu$ on $\Gamma \backslash G$. In \cite{garnett},  L. Garnett proved that $\mu$ is a harmonic measure by describing its density function on a foliated chart. 
\medbreak

We can identify $G / A$ with 
the boundary $\partial \hh$ by sending each coset of $A$ in $G$ to the center of the horocycle 
defined by the corresponding coset of $H^+$ in $G$.   
For each point $z \in \hh$, there is a unique probability measure $\nu_z$ on $\partial \hh$ which is invariant by the action of all isometries of $\hh$ fixing $z$. This measure is the image
of the normalized Lebesgue measure on the circle of the tangent plane at $z$ under the exponential map, and  is called the {\em visual measure} at $z$. According to Proposition 2 of \cite{garnett}, the normalized density function is given by
$d\nu_{z} / d\nu_{z_0}(x)$
where $z,z_0 \in \hh$ and $x \in \partial \hh$. In particular, for $x = \infty$, we have that 
$$
\quad  \quad  \quad  \quad 
\frac{d\nu_{z}}{d\nu_{z_0}}(\infty)  = \frac{y}{y_0}
$$
where $z = x + i y$ and $z_0 = x_0 + i y_0$. In the leaf passing through $x = \infty$, the sequence 

\noindent
$V_n^\infty  = \{ \, z  \in \hh \, | \, -1 \leq x \leq 1 \, , \, e^{-n} \leq y \leq 1\, \}$
becomes a $\eta$-averaging sequence (where 
$\eta$ is the modular form of $\mu$). Indeed, 
on the one hand,  we have that
$$
\mbox{area}_\eta (V_n^\infty) = \int_{V_n^\infty} \, \frac{d\nu_{z}}{d\nu_i}(\infty) \,  dvol(z) = 
\int_{V_n^\infty} \, y \, \frac{dx \wedge dy}{y^2} = \int_1^1 dx \int_{e^{-n}}^1 \frac{dy}{y} = 2n. 
$$
On the other hand, the modified length of a smooth curve $\sigma(t) = x(t) + i y(t)$ (with 
$0 \leq t \leq l$) is given by
$\mbox{length}_\eta(\sigma) = \int_0^l \sqrt{x'(t)^2 + y'(t)^2} dt$,
and so we have that
$$
\mbox{length}_\eta (\partial V_n^\infty) = 2 (2+(1-e^{Ðn})) \leq 6.
$$
As before, this $\eta$-averaging sequence defines a harmonic measure (which is equal to $\mu$ up to multiplication by a constant). In fact, all leaves are $\eta$-F\o lner since for each point $x \in \partial \hh$ obtained as the image of $\infty$ under $g \in G$, the sets $V_n^x  =  g(V_n^\infty)$ form a $\eta $-averaging sequence in the leaf passing through $x$. 

\subsection{Averaging sequences for torus bundles over the circle.} To
conclude, we will  present other examples of foliations on homogeneous spaces studied by  \'E. Ghys and V. Sergiescu in \cite{ghyssergiescu}. Each matrix $A \in SL(2,\zz)$ with $| tr(A) | > 2$ defines a natural representation $\varphi : \zz \to Aut(\zz^2)$ which extends to a representation 
$\Phi : \rr \to Aut(\rr^2)$ given by $\Phi(t) = A^t$. If $\lambda > 1$ and 
$\lambda^{-1} < 1$ are the eigenvalues of $A$, then $\Phi$ is conjugated to the representation 
$\Phi_0$ given by 
$$
\Phi_0(t) = \matrice{\lambda^t}{0}{0}{\lambda^{-t}}.$$
Let $T^3_A$ be the homogeneous space obtained as the quotient of the Lie group 
$G = \rr^2 \rtimes_\Phi \rr$ with group law
$(x,y,t).(x',y',t') = ((x,y)+A^t(x',y'),t+t')$
by the uniform lattice $\Gamma = \zz^2 \rtimes_\varphi \zz$ with a similar law. 
Observe that $G$ is isomorphic to the solvable group $Sol^3 = \rr^2 \rtimes_{\Phi_0} \rr$ with 
group law 
$(x,y,t).(x',y',t') = (x+\lambda^tx',y+\lambda^{-t}y',t+t')$
(where $x$ and $y$ are the first and second coordinate with respect to the eigenbasis) 
and $T^3_A$ is diffeomorphic to the quotient of $Sol^3$ by a uniform lattice $\Gamma_0$. 
The right action of the image $A$ of the monomorphism
$$(a,b) \in \rr \rtimes \rr^\ast_+ \mapsto \left(a,0,\frac{\log b}{\log \lambda}\right) \in Sol^3 $$
defines a foliation $\F$ on $T^3_A$. 
The Lebesgue measure on $T^3_A$ defined by the volume form
$\Omega = dx \wedge dy \wedge dt$ is a tangentially smooth measure. Since 
the Riemannian volume along the right orbits is given by 
$$
\frac{da \wedge db}{b^2} = (\log \lambda) \,  \lambda^{-t} dx \wedge dt
$$
the density function is equal to $\frac{\lambda^t}{\log \lambda}$. In the orbit of the identity element, the sequence $V_n  = \{ \, (a,b)  \in A \, / \, -1 \leq a \leq 1 \, , \, e^{- n \log \lambda} \leq b \leq 1 \, \}$
becomes a $\eta$-averaging sequence (where $\eta$ is the modular form of $\mu$). Indeed, 
on one hand,  we have that
$$
\mbox{area}_\eta (V_n) = \int_{V_n} \, \frac{1}{\log \lambda} \lambda^t  (\log \lambda) \lambda^{-t} dx \wedge dt  = \int_1^1 dx \int_{-n}^0 dt = 2n. 
$$
On the other hand, the modified length of a smooth curve $\sigma(t) = (a(t),b(t)$ (with 
$0 \leq t \leq L$) is given by
$\mbox{length}_\eta(\sigma) = \int_0^L \sqrt{a'(t)^2 + b'(t)^2} dt$,
and so we have that
$$
\mbox{length}_\eta (\partial V_n) = 2 (2+(1-e^{Ðn \log \lambda})) \leq 6.
$$
By replacing the orbit corresponding to $y=0$ with another orbit, it is easy to see that all leaves are $\eta$-F\o lner. As in the previous example, all $\eta$-averaging sequen\-ces define (up to multiplication by a constant) the same harmonic measure: the Lebesgue measure.

\setcounter{equation}{0}

\section{Final comments} \label{Sfc}

\subsection{Discrete and continuous averaging sequences} Comparing the discrete and continuous settings, a natural question arises: what is the relation between $\delta$-averaging and $\eta$-averaging sequences? Let us first notice that repeating the same argument as in the classical case (see Theorem 4.1 of \cite{kanai}), the boundedness condition derived from Harnack's inequalities in Remark~\ref{harnackremark}.(iii) implies that {\em the leaf $L_y$ is $\eta$-F{\o}lner if and only if the equivalence class $\R[y]$ is $\delta$-F{\o}lner}. But then, what is the relation between the harmonic measures defined by $\delta$-averaging and $\eta$-averaging sequences? In this case, the answer is more subtle, and we have to use an important result of R. Lyons and D. Sullivan  \cite{lyonssullivan}, completed later by V. A. Kaimanovich \cite{kaimdiscrete} and independently by W. Ballman and F. Ledrappier \cite{ballmannledrappier}, about the discretization of harmonic functions on Riemannian manifolds. First, according to Theorem 6 of \cite{lyonssullivan},  if $\mu$ is a harmonic measure, then the transverse measure $\nu$ (well defined up to equivalence) is $\pi$-harmonic where $\pi$ is a transition kernel defining a random walk on $\R$ different from the simple random walk considered in Definition~\ref{defharm}. Reciprocally, assuming that $T$ admits a relatively compact neighborhood which meets almost every leaf in a recurrent set, the Main Theorem  of \cite{ballmannledrappier} implies that $\mu$ is harmonic if $\nu$ is $\pi$-harmonic.
\medskip

\subsection{Amenability} It is not casual that all examples in
Section~\ref{Sex} are amenable: according to a resul by V. A. Kaimanovich \cite{kaimcomptes}, amenable foliations admit always averaging sequences. In fact, if $\F$ is an amenable foliation with respect to a tangentially smooth measure $\mu$, then $\F$ is $\eta$- F\o lner, i.e. $\mu$-almost every leaf is $\eta$-F\o lner, see Proposition 4.3 of \cite{alcalderechtman}. This paper can be viewed as a sequel of \cite{alcalderechtman} where we proved that minimal $\eta$-F\o lner foliations are $\mu$-amenable (assuming that the modified volume of the plaques is bounded). To complete the series, we have to prove that any foliation is amenable with respect to a tangentially smooth measure $\mu$ constructed from an averaging sequence using Theorem~\ref{thharmonic}.

\end{document}